\newcommand{\N}{\mathbf{N}}
\newcommand{\R}{\mathbf{R}}
\newcommand{\RP}{\mathbf{RP}}
\newcommand{\C}{\mathbf{C}}
\DeclareFontFamily{OT1}{pzc}{}
\DeclareFontShape{OT1}{pzc}{m}{it}{<-> s * [1.200] pzcmi7t}{}
\DeclareMathAlphabet{\mathpzc}{OT1}{pzc}{m}{it}
\renewcommand{\P}{\mathbf{PGL}(d)} 
\newcommand{\A}{\mathpzc{A}} 
\renewcommand{\a}{\mathpzc{a}} 
\newcommand{\G}{\mathpzc{G}} 
\newcommand{\g}{\mathpzc{g}} 
\newcommand{\Fr}{\mathpzc{P}} 
\newcommand{\fr}{\mathpzc{p}} 
\newcommand{\B}{\mathpzc{B}} 
\renewcommand{\b}{\mathpzc{b}} 
\newcommand{\T}{\mathpzc{T}} 
\renewcommand{\t}{\mathpzc{t}} 
\newcommand{\F}{\mathpzc{F}} 
\newcommand{\f}{\mathpzc{f}} 
\newcommand{\SP}{\mathpzc{S}} 
\renewcommand{\sp}{\mathpzc{s}} 
\newcommand{\FRK}{\mathpzc{R}} 
\newcommand{\frk}{\mathpzc{r}} 
\newcommand{\SN}{\mathpzc{N}} 
\newcommand{\sn}{\mathpzc{n}} 
\providecommand{\rk}{\mathop{\rm rk}\nolimits} 
\newcommand{\GL}{\mathbf{GL}(d+1)} 
\renewcommand{\O}{\mathbf{O}} 
\newcommand{\Cl}{\mathbf{Cl}} 
\newcommand{\Bl}{\mathbf{Bl}} 
\newcommand{\Gr}{\mathbf{Gr}}
\newcommand{\Id}{\mathbf{I}} 
\newtheorem{theorem}{Theorem}[section] 
\newtheorem{lemma}[theorem]{Lemma}     
\newtheorem{corollary}[theorem]{Corollary}
\newtheorem{proposition}[theorem]{Proposition}
\newtheorem{example}[theorem]{Example}
\theoremstyle{nonumberplain}
\newtheorem{proof}{Proof}
\title{Manifolds of Projective Shapes}
\author{T. Hotz, F. Kelma and J. T. Kent}
\begin{document}
\maketitle

\begin{abstract}
The projective shape of a configuration of $k$ points or ``landmarks'' in $\RP^d$ consists of the information that is invariant under projective transformations and hence is reconstructable from uncalibrated camera views. 
Mathematically, the space of projective shapes for these $k$ landmarks can be described as the quotient space of $k$ copies of $\RP^d$ modulo the action of the projective linear group $\P$.
Using homogeneous coordinates, such configurations can be described as real $k\times(d+1)$-dimensional matrices given up to left-multiplication of non-singular diagonal matrices, while the group $\P$ acts as $\GL$ from the right.
The main purpose of this paper is to give a detailed examination of the topology of projective shape space, and, using matrix notation, it is shown how to derive subsets that are in a certain sense maximal, differentiable Hausdorff manifolds which can be provided with a Riemannian metric. 
A special subclass of the projective shapes consists of the Tyler regular shapes, for which geometrically motivated pre-shapes can be defined, thus allowing for the construction of a natural Riemannian metric.
\end{abstract}

\section{Introduction}

The space of projective shapes $\a_d^k$ of $k$ landmarks in $d$-dimensional real projective space $\RP^d$ is of interest in computer vision.
It is commonly defined as the topological quotient of the product of $k$ copies of $\RP^d$ modulo the landmark-wise action of the projective linear group $\P$.
This space arises naturally in the single view uncalibrated pinhole camera model: when taking a $d$-dimensional picture in $\R^{d+1}$ of a $d$-dimensional object without knowledge of any camera parameters such as focal length, angle between the object hyperplane and film hyperplane, etc., then the original object can only be reconstructed up to a projective transformation.
Similarly, it arises in the multiple view uncalibrated pinhole camera model: when taking multiple uncalibrated $d$-dimensional pictures of an object in $\R^{d+1},$ the original configuration of landmarks can only be reconstructed up to a projective transformation.
For details, we refer the reader to the literature, e.g.\ \cite{FL,HZ}.

Other spaces of interest in computer vision include similarity and affine shape spaces.
In shape spaces, one would often like to make metric comparisons, which requires e.g.\ the structure of a Riemannian manifold.
For affine or similarity shapes, the topology of the shape space is well understood and there are natural choices for a Riemannian metric.
Similarity shape space is a CW complex after removing the trivial shape \cite{KBCL}, while affine shape space has a naturally ordered stratification with each stratum being diffeomorphic to a Grassmannian \cite{GT,PM}.
In both cases, the topological subspace of shapes with trivial isotropy group, i.e.\ the shape space of the configurations on which the group action is free, has a natural structure of a Riemannian manifold.

In the case of projective shapes, it turns out that the topological subspace of shapes with trivial isotropy group cannot be given the structure of a Riemannian manifold since it is only a differentiable T1 manifold, but not Hausdorff.
Hence, we have to look for other topological subspaces, which can be endowed with a Riemannian metric. This search is the main purpose of this article.

Besides the quest for a Riemannian structure, there are more desirable properties for a ``good'' topological subspace: 
\begin{enumerate}[label=(\alph*)]
	\item it should be a differentiable Hausdorff manifold with complete Riemannian metric;
	\item it should be closed, and the Riemannian metric invariant, under reordering of the landmarks in the configuration $p=(p_1,\dots,p_k)\in\bigl(\RP^d\bigr)^k$ (relabeling);
	\item when containing a degenerate shape, i.e.\ a shape with non-trivial projective subspace constraints (see Section~\ref{notation}), it should also contain all less degenerate shapes; we will then say that the topological subspace respects the hierarchy of projective subspace constraints;
	\item it should contain as many shapes as possible in the sense that adding further shapes results in the violation of at least one of the properties (maximality).
\end{enumerate}

To our knowledge, there are only two established ways to obtain topological subspaces fulfilling some of these properties, which will be discussed in Section~\ref{approaches}.
Firstly, one can take only those shapes whose first $d+2$ landmarks are in general position and thus form a so-called projective frame.
This topological subspace is homeomorphic to the product of $k-d-2$ copies of $\RP^d$ \cite{MP};
in particular it respects the hierarchy of projective subspace constraints while being maximal, Hausdorff and a differentiable manifold, i.e.\ locally Euclidean with smooth transition maps and second-countable.
Unfortunately, it is not closed under relabeling.
Secondly, one can take all those shapes whose projective subspace constraints fulfill a certain regularity condition, called Tyler (fully-)regular \cite{KM}.
This topological subspace is Hausdorff, closed under relabeling, respects the hierarchy of projective subspace constraints and, as we show in Section~\ref{HausdorffSubsets}, it is a differentiable manifold.
However, these topological subspaces have been constructed in an ad hoc fashion. As of now there is no systematic approach to obtain ``good'' topological subspaces based on the geometrical and topological properties of projective shape space.

In this paper, we therefore analyze the topology of projective shape space in detail.
After recalling some basic facts, fixing our notation in Section~\ref{notation}, discussing the simplest non-trivial case in Section~\ref{case14} and prior approaches in Section~\ref{approaches}, we show which shapes can be separated from each other in the T1 sense, i.e., at least one lies outside some open neighborhood of the other, in Section~\ref{free}.
In particular, we will show that the topological subspace of free shapes, i.e.\ those with trivial isotropy group, is T1 and a differentiable manifold.
Since there are free shapes without a projective frame, frames do not suffice to construct charts on this subspace.
We thus generalize the notion of a frame to obtain charts.
In Section~\ref{HausdorffSubsets}, we show that two shapes which cannot be separated in the Hausdorff sense are already degenerate in a particular way.
This allows us to characterize a reasonable family of differentiable Hausdorff manifolds in Section~\ref{SubNum} which additionally possess properties (b), (c), and (d).
In Section~\ref{Geometry}, we give a geometric justification for Tyler standardization of Tyler regular shapes  \cite{KM} and a Riemannian metric on this topological subspace. 

\section{Preliminaries and notation}\label{notation}

For $d>0,$ real projective space $\RP^d$ is defined as the topological quotient of $\R^{d+1}\setminus\{0\}$ modulo the multiplicative group $\R^*=\R\setminus\{0\},$ i.e.\
\begin{equation}
\RP^d=\bigl\{p=\{\lambda P : \lambda\in\R^*\} : P\in\R^{d+1}\setminus\{0\}\bigr\},
\end{equation}
so it can be seen as the space of lines through the origin in $\R^{d+1}$.
If $p\in\RP^d$ is represented by $P\in\R^{d+1}$, $p=\{\lambda P : \lambda\in\R^*\},$ one calls $P$ homogeneous coordinates for $p$.
A projective subspace of $\RP^d$ of dimension $n<d$ is then the set of lines lying in an $(n+1)$-dimensional linear subspace of $\R^{d+1}$.
Analogously, one can define the projective span of points in $\RP^d$ as the set of lines lying in the linear span of some representatives of the points in $\R^{d+1}$.
These objects are studied in projective geometry, see e.g.\ \cite{RG}.

As the action of the general linear group $\GL$ on $\R^{d+1}$ commutes with the action of $\R^*$, there is a well-defined action of $\GL$ on $\RP^d$ by letting it act on homogeneous coordinates in $\R^{d+1}$.
Since the action of a matrix on $\RP^d$ does not change when multiplying the matrix by a non-zero scalar, the action of $\GL$ is identical with the action of the projective linear group $\P = \GL \mathbin\big/ \R^*$.
This action is naturally carried forward to the product space of configurations with $k$ landmarks
\begin{equation}
\A_d^k = \big( \RP^d \big)^k = \RP^d\times\dots\times\RP^d
\end{equation}
by letting it act component-wise.
Note that projective transformations, i.e.\ the elements of $\P$, map projective subspaces of $\RP^d$ to projective subspaces of the same dimension, i.e.\ points to points, lines to lines etc.
So, if $p\in\A_d^k$ is a configuration having three landmarks on a line, then the images of these three landmarks under a projective transformation also lie on a line.

For $d\geq 1$ and $k\geq d+3$, the space of projective shapes of $k$ landmarks in $\RP^d$ is defined to be the quotient space
\begin{equation}
\a_d^k = \big( \RP^d \big)^k\mathbin\big/\P
\end{equation}
together with the quotient topology.
Since the projection map $\pi:\A_d^k\rightarrow\a_d^k$ is open, the topology of $\a_d^k$ is also second countable; it thus can be characterized by sequences, just like $\A_d^k$.
Further, we can represent a configuration $p\in\A_d^k$ in homogeneous coordinates: up to left-multiplication with a diagonal $k\times k$-dimensional matrix with non-zero real entries, the $k$ landmarks in $\RP^d$ can be represented as a real $k\times(d+1)$-dimensional matrix $P$ whose non-trivial rows $P_{i\cdot}\in\R^{d+1}$, $i=1,\dots,k,$ represent the landmarks in $\RP^d$.
The corresponding equivalence class $[P]$, i.e.\ the shape of $P$, consists of all matrices of the form $DPB$ with $D$ being a non-singular diagonal $k\times k$-dimensional matrix, $B$ a non-singular $(d+1)\times (d+1)$-dimensional matrix, i.e.,
\begin{equation}
[P] = \bigl\{ DPB : D\in\mathbf{GL}(k) \mbox{ diagonal}, B\in\GL \bigr\}.
\end{equation}
Throughout this article, we denote a configuration $p \in (\RP^d)^k$ by a lower case letter, its matrix representation $P \in \R^{k \times (d+1)}$ by the corresponding upper case letter and the shape of $p$ resp.\ $P$ by $[p]$ resp.\ $[P]$.
In abuse of language, we will call $P$ a configuration, too.
Further, we define the rank $\rk p$ of a configuration $p$ to be the rank of any corresponding matrix $P$.
Note that the rank is invariant under $\P$ and thus also well-defined on $\a_d^k$.

Our aim is to find topological subspaces of $\a_d^k$ that can be given the structure of a Riemannian manifold. Topologically speaking, these topological subspaces need to be differentiable Hausdorff manifolds,
as those can be given the structure of a Riemannian manifold \cite{Lee}.

Unfortunately, the space of all projective shapes $\a_d^k$ is not a differentiable Hausdorff manifold, and indeed it is not even T1.
This is easily seen by considering the open neighborhoods of the trivial shape where all landmarks coincide.
Any open neighborhood of the trivial shape is actually already the full space $\a_d^k$.
This phenomenon occurs in similarity and affine shape space as well \cite{KBCL, GT, PM}.

As an example of such a non-Hausdorff space consider similarity shapes of $k$ landmarks in $\R^2$, i.e.\ configurations in $\R^2$ up to similarity transformations.
The corresponding shape space is the topological quotient
\begin{equation*}
\mathbf{sim}_2^k = \bigl( \R^2 \bigr)^k \mathbin\Big/ \bigl( \mathbf{SO}(2)\times \R^* \ltimes \R^2 \bigr)
\label{eq:Simk2}
\end{equation*}
with $\R^2$ acting as translations, $\R^*$ acting as rescaling, $\mathbf{SO}(2)$ acting as rotation of configurations in $\bigl(\R^2\bigr)$, and $\ltimes$ denoting the semi-direct product.
Due to the presence of translations we may consider only centralized configurations---or equivalently configurations of $k-1$ landmarks---up to rotations and rescaling; 
by identifying $\R^2$ with $\C$ we conclude
\begin{equation*}
\mathbf{sim}_2^k \cong \bigl( \R^2 \bigr)^{k-1} \mathbin\Big/ \bigl( \mathbf{SO}(2)\times \R^* \bigr) \cong \C^{k-1} \mathbin\big/ \C^*.
\label{eq:Simk2}
\end{equation*}
Then by rescaling, any equivalence class of centralized configurations is arbitrarily close to the trivial configuration $p_0=(0,\ldots,0)\in\C^{k-1}$, for which all landmarks coincide with the origin.
Hence, $\mathbf{sim}_2^k$ is not Hausdorff.
However, if the trivial shape $[p_0]$ is omitted the resulting similarity shape space $\mathbf{sim}_2^k \setminus \bigl\{[p_0]\bigr\}$ is Hausdorff since $\mathbf{sim}_2^k \setminus \bigl\{[p_0]\bigr\}$ is a complex projective space:
\begin{equation*}
\mathbf{sim}_2^k \setminus \bigl\{[p_0]\bigr\} \cong  \C^{k-1} \setminus\{0\} \mathbin\big/ \C^* = \mathbf{CP}^{k-2}.
\label{eq:Simk2}
\end{equation*}
Thus, it is natural to omit the trivial shape from $\mathbf{sim}_2^k$.

Before we turn to analyze projective shape space $\a_d^k$ in detail, we introduce some interesting topological subspaces of $\A_d^k$ (resp.\ $\a_d^k$): \begin{itemize}\itemsep.3em

\item[$\G_d^k$,] which contains a configuration $p = (p_1, \dots, p_k) \in \A_d^k$ if and only if the landmarks $p_1, \dots, p_k \in \RP^d$ are in \emph{\textbf{g}eneral position}, i.e., no $m$-dimensional projective subspace of $\RP^d$ with $0 \leq m < d$ contains more than $m + 1$ of the landmarks, i.e., any $d + 1$ of the landmarks in $p$ span $\RP^d$.
Analogously, a matrix configuration $P$ is in general position if any subset of $d+1$ rows of $P$ is linearly independent.
An element of $\G_d^{d+2}$ is called a \emph{(projective) frame}. Note that $\G_d^k$ is dense in $\A_d^k$.

\item[$\B_d^k$,] which contains a configuration $p\in \A_d^k$ if and only if the first $d+2$ landmarks in $p$ form a frame, i.e., if and only if $(p_1, \dots, p_{d+2}) \in \G_d^{d+2},$ hence $\G_d^k\subset\B_d^k$.
Frames allow us to define the equivalent of \emph{\textbf{B}ookstein coordinates}~\cite{DM} for similarity shapes, see Lemma~\ref{Bookstein} and \cite[p.~1672; $\B_d^k$ being called $G(k,d)$ there]{MP}.
\item[$\Fr_d^k$,] which contains a configuration $p \in \A_d^k$ if and only if it contains at least one frame, i.e., if and only if there exists a \textbf{p}ermutation $\sigma \in S_k$ of the landmarks such that $\sigma (p) \in \B_d^k,$ thus $\B_d^k\subset\Fr_d^k$ \cite[Remark~2.1; $\Fr_d^k$ being called $\mathcal{FC}_d^k$ there]{MP}.
\item[$\F_d^k$,] which contains a configuration $p \in \A_d^k$ if and only if it has trivial isotropy group, i.e., $\{g\in\P: gp=p \}=\{e\}$.
Elements with trivial isotropy group are called \emph{\textbf{f}ree} or \emph{regular.} Note that $\Fr_d^k\subseteq\F_d^k$ as shown by Mardia and Patrangenaru \cite{MP}.
Analogously, a matrix configuration $P$ is called free if the isotropy group of~$P$ is $ \{ (D,B) : DPB=P \} = \bigl\{ (\lambda \Id_k, \lambda^{-1} \Id_{d+1} ) : \lambda\in\R\setminus\{0\} \bigr\}$ with $\Id_i$ denoting the $i$-dimensional unity matrix.
\item[$\FRK_{\,d}^k$,] which contains a configuration $p \in \A_d^k$ if and only if any corresponding matrix configuration $P$ is of full \textbf{r}ank, i.e., there is no projective subspace of dimension $m<d$ which contains all landmarks.
\item[$\SP_d^k$,] which contains a configuration $p \in \A_d^k$ if and only if it is \emph{\textbf{s}plittable}, i.e., there is a subset $I\subsetneq \{ 1,\ldots,k \}$ s.t.\ $ \rk p_I + \rk p_{I^c} \leq d+1$ where $I^c=\{1,\dots,k\}\setminus I$ and $p_I$ denotes the restriction of $p$ to landmarks with index $i\in I$.
Equivalently, a matrix configuration is splittable if and only if it is not of full rank or its set of rows decomposes into two or more subsets spanning linearly independent subspaces.
Note that $\A_d^k\setminus\FRK_{\,d}^k\subset\SP_d^k$ (take $I=\{1\}$).
\item[$\T_d^k$,] which contains a configuration $p \in \A_d^k$ if and only if any $j$-dimensional projective subspace of $\RP^d$, $j=0,\dots, d-1$ contains fewer than $k \tfrac{j+1}{d+1}$ landmarks.
These configurations are called \emph{\textbf{T}yler \mbox{(fully-)}\-regular} by Kent and Mardia \cite{KM}.
Equivalently, a matrix configuration $P$ is called Tyler regular if and only if any $(j+1)$-dimensional subspace of $\R^{d+1}$ contains fewer than $k \tfrac{j+1}{d+1}$ of the rows of $P$.
\end{itemize}
Note that each of these subspaces is closed under the action of $\P$.
We hence denote the set of equivalence classes by a lower case letter, the corresponding set of configurations by an upper case letter, for example $\A_d^k, \B_d^k$ etc.\ for the configuration spaces, $\a_d^k, \b_d^k$ etc.\ for the corresponding shape spaces.

We say that a configuration $p\in\A_d^k$ \emph{fulfills the projective subspace constraint} $(I,j)$ for a subset $I\subseteq \{1,\ldots,k\}$ of size $|I|\geq j$, $1\leq j<d+1$, if and only if there is a projective subspace $S\subset\RP^d$ of dimension $j-1$ such that $p_i \in S$ for all $i\in I$, i.e.\ $\rk p_I\leq j$, or equivalently, if and only if for any corresponding matrix configuration $P$ there is a $j$-dimensional linear subspace $V$ of $\R^{d+1}$ such that the rows $P_{i\cdot}$ of $P$ are elements of $V$ for all $i\in I$.
We denote the \emph{collection of projective subspace constraints} fulfilled by a configuration $p\in\A_d^k$ by $C(p)=\big\{(I,j) : p \mbox{ fulfills } (I,j) \big\}$.
We call a projective subspace constraint $(I,j)\in C(p)$ \emph{trivial} if $I\subseteq \{1,\ldots,k\}$ is a subset of size $|I| = j$, and \emph{non-trivial} otherwise.
Further, we call $(I,j)\in C(p)$ \emph{splittable} in $C(p)$ if there are $(I_1,j_1),(I_2,j_2)\in C(p)$ with $j_1+j_2=j$, $I_1\cup I_2=I,$ $I_1\cap I_2=\emptyset$.
Thus a configuration $p$ is splittable, i.e., $p\in\SP_d^k$, if and only if $(d+1,\{1,\dots,k\})$ is splittable (slightly generalizing our notation).
We noted before that $C(p)$ is invariant under $\P$, i.e., $C(p)=C(\alpha p)$ for all $\alpha\in\P$, whence $C(p)$ is a property of the projective shape $[p]$.

\section{The case $d=1$, $k=4$}\label{case14}

To motivate the approach taken in this article, let us start with 
the simplest nontrivial configuration space:  four landmarks in $\RP^1$.
For the sake of argument, let $P\in\A_1^4$ be a configuration with $P_{1\cdot}=(x_1,1),P_{2\cdot}=(x_2,1),P_{3\cdot}=(x_3,1),P_{4\cdot}=(x_4,1)$, say.
It is convenient to distinguish five types of configuration:

\begin{enumerate}[label=(\alph*)]
\item all four points distinct, $|\{ x_1, x_2, x_3, x_4 \}| = 4$, i.e., $C(P)$ contains only trivial subspace constraints;
\item single pair coincidence, e.g. $x_4 \neq x_1 = x_2 \neq x_3$, i.e., $(\{1,2\},1)\in C(P)$ is the only non-trivial subspace constraint;
\item double pair coincidence, e.g. $x_1 = x_2 \neq x_3 = x_4$, i.e., $(\{1,2\},1),$ $(\{3,4\},1)\in C(P)$ are the only non-trivial subspace constraints;
\item triple pair coincidence, e.g. $x_1 = x_2 = x_3 \neq x_4$, i.e., $(\{1,2,3\},1)$, $(\{1,2\},1)$, $(\{1,3\},1)$, $(\{2,3\},1)\in C(P)$ are the only non-trivial subspace constraints;
\item quadrupal pair coincidence, $x_1 = x_2 = x_3 = x_4$, i.e.\ $(\{1,2,3,4\},1)\in C(P)$.
\end{enumerate}
Any two configurations with the same projective shape will have the same type of coincidence, i.e.\ projective subspace constraints, so projective shapes $[P]\in\a_1^4$ can be uniquely assigned to types (a)--(e).

It is well-known, see e.g.\ \cite{KM}, that, at least for shapes in $\g_d^k$ (type (a)), the projective shape of $P$ can be described by the cross ratio,
\begin{equation*} 
\rho = \frac{(x_1 - x_2) (x_3 -x_4)}{(x_1 - x_3)(x_2 - x_4)}, 
\end{equation*}
and that $\rho$ lies in one of the
intervals $(-\infty,0), (0,1), (1,\infty)$, a Hausdorff space with three connected components.  
However, the relationship between projective shape and the cross ratio becomes more delicate when we move to the projective shapes for types (b)--(e).

First consider type (e): when all landmarks coincide, then any representing matrix $P_{1=2=3=4}$ comprises rows identical up to rescaling;
further, there are a diagonal matrix $D\in\mathbf{GL}(4)$ and a non-singular matrix $B\in\mathbf{GL(2)}$ such that
\begin{equation*}
DP_{1=2=3=4}B=\begin{pmatrix}
	1 & 0 \\
	1 & 0 \\
	1 & 0 \\
	1 & 0 
\end{pmatrix},
\label{eq:}
\end{equation*}
i.e.\ there is just one single projective shape $[p_{1=2=3=4}]$ of type (e).
The cross ratio is not defined in this case and each open neighborhood of $[p_{1=2=3=4}]$ includes all of projective shape space, as we will discuss in Example~\ref{ex:blur}.

Type (d) is very similar.
In this case there are four distinct projective shapes, labeled e.g. $[p_{1=2=3}]$, etc., depending on which landmark is distinct from the rest.
Again, the cross ratio is not defined.
Each open neighborhood of each of the four projective shapes includes all the  projective shapes for cases (a)--(c), see Example~\ref{ex:blur}.

Types (b) and (c) are more interesting.
Type (c) contains three projective shapes, labeled e.g. $[p_{1=2,3=4}]$, depending on how the landmarks are paired.
Type (b) contains six projective shapes, labeled e.g. $[p_{1=2}]$, again depending on how the landmarks are paired.
In each case the cross ratio is well-defined if one allows values in $\R\cup\{\infty\}$, and it takes the value 0, 1 or $\infty$.
However, the same cross ratio now corresponds to several projective shapes.
For example, $\rho=0$ for the three projective shapes $[p_{1=2,3=4}],\ [p_{1=2}],\ [p_{3=4}]$.
Further, any open set of $[p_{1=2,3=4}]$ always includes the projective shapes $\ [p_{1=2}],\ [p_{3=4}]$, again see Example~\ref{ex:blur}.
Hence, the resulting topological space cannot be Hausdorff in violation of requirement (a) if all three projective shapes are included.
Even worse, no neighborhoods of $\ [p_{1=2}]$ and $[p_{3=4}]$ are disjoint as we will see in Example~\ref{ex:Hausdorff}, so only one of these shapes can be included if the shapes in general position (type (a)) are to be included.

In summary, when looking for a maximal Hausdorff subset of projective shape space, it is essential to keep all the projective shapes of type (a) due to our requirement (c) and to exclude all the projective shapes of types (d)--(e).
However, it is not possible to keep all of the projective shapes for cases (b)--(c).
Keeping those of type (b) would violate the Hausdorff property and thus requirement (a).
Keeping those of type (c) and thus excluding those of type (b) would violate requirement (c), though.
So, only shapes of type (a) may be kept, resulting in an unconnected Hausdorff differentiable manifold.

As we guide the reader through the possible choices in the general setting, the shape space $\a_1^4$ will always be discussed as our motivating example.

\section{Previous approaches}\label{approaches}

The first statistical approach to analyzing projective shapes \cite{MP} used projective frames which are a well-known concept in projective geometry.
As mentioned before, a frame is an ordered set of $d+2$ landmarks in general position.
The group action of $\P$ is both transitive and free on the space $\G_d^{d+2}$ of frames, i.e., for any two frames there is a unique projective transformation mapping one frame to the other \cite{MP}.
This quickly leads to the following result by mapping the frame in the first $d+2$ landmarks to a fixed frame:

\begin{lemma}[\cite{MP}]\label{Bookstein}
$\b_d^k$ is homeomorphic to $\bigl(\RP^d\bigr)^{k-d-2}$.
\end{lemma}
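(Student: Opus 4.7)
The plan is to construct an explicit homeomorphism using the fact that $\P$ acts freely and transitively on $\G_d^{d+2}$. Fix an arbitrary reference frame $f_0 = (f_{0,1},\dots,f_{0,d+2}) \in \G_d^{d+2}$. Since the action is transitive and free, for every $p \in \B_d^k$ there exists a unique $\alpha_p \in \P$ such that $\alpha_p(p_1,\dots,p_{d+2}) = f_0$.

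First I would define the candidate map. Let $\phi : \B_d^k \to \bigl(\RP^d\bigr)^{k-d-2}$ by
\begin{equation}
\phi(p) = \bigl( \alpha_p p_{d+3},\, \dots,\, \alpha_p p_k \bigr).
\end{equation}
Because any $g \in \P$ acts on $p$ by $gp$ and thereby changes $\alpha_p$ to $\alpha_p g^{-1}$, the value $\phi(p)$ is invariant under the $\P$-action on $\B_d^k$. Hence $\phi$ descends to a well-defined map $\bar\phi : \b_d^k \to \bigl(\RP^d\bigr)^{k-d-2}$. For the candidate inverse I would use the section $\psi : \bigl(\RP^d\bigr)^{k-d-2} \to \b_d^k$,
\begin{equation}
\psi(q_1,\dots,q_{k-d-2}) = \bigl[\, (f_{0,1},\dots,f_{0,d+2},q_1,\dots,q_{k-d-2}) \,\bigr],
\end{equation}
which is continuous as the composition of an inclusion into $\B_d^k$ with the quotient projection $\pi$.

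The next step is bijectivity: $\bar\phi \circ \psi = \mathrm{id}$ is immediate because $\alpha_{\psi(q)} = e$, and $\psi \circ \bar\phi = \mathrm{id}$ because $\alpha_p \cdot p$ and $p$ lie in the same $\P$-orbit, so they represent the same shape in $\b_d^k$. It remains to check continuity of $\bar\phi$; by the universal property of the quotient topology, this reduces to continuity of $\phi : \B_d^k \to \bigl(\RP^d\bigr)^{k-d-2}$, and since the action $\P \times \A_d^k \to \A_d^k$ is continuous, it suffices to show that $p \mapsto \alpha_p$ is continuous on $\B_d^k$.

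The main obstacle is precisely this last point. I would establish it by noting that the orbit map $\P \to \G_d^{d+2}$, $\alpha \mapsto \alpha^{-1} f_0$, is a continuous bijection (transitivity and freeness), and then exhibit an explicit continuous inverse. Concretely, given $(p_1,\dots,p_{d+2})$ in general position, one can choose representative vectors $v_i \in \R^{d+1}\setminus\{0\}$ locally continuously, solve the linear system $v_i = \lambda_i B w_i$ for fixed representatives $w_i$ of $f_0$ (with the $\lambda_i$ determined up to common scaling by the general position assumption, using that $\{v_1,\dots,v_{d+1}\}$ is a basis and $v_{d+2}$ has a unique expansion in it), and thereby write $\alpha_p = [B^{-1}]$ as a continuous function of $p$ in a neighbourhood. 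Once continuity of $p \mapsto \alpha_p$ is in hand, continuity of $\phi$, and hence of $\bar\phi$, follows immediately, completing the proof that $\bar\phi$ is a homeomorphism with inverse $\psi$.
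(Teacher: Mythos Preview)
Your proof is correct and follows exactly the approach the paper indicates: it merely states that the lemma ``quickly [follows] by mapping the frame in the first $d+2$ landmarks to a fixed frame,'' which is precisely your map $p\mapsto\alpha_p p$. Your contribution is to make explicit the bijectivity and, in particular, the continuity of $p\mapsto\alpha_p$ via local sections and linear algebra, which the paper leaves implicit by citing \cite{MP}.
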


So, $\b_d^k$ is a differentiable Hausdorff manifold and respects the hierarchy of projective subspace constraints, but is not closed under relabeling.
The closure of $\b_d^k$ under permutations is---by definition---the topological subspace $\fr_d^k$ of shapes with a frame. $\fr_d^k$ is a differentiable manifold, but not Hausdorff for any $d\geq 1$, $k\geq d+3$ as we will see in Proposition~\ref{Hausdorff}.

\begin{corollary}\label{frameMnf}
$\fr_d^k$ is homeomorphic to a $d(k-d-2)$-dimensional differentiable T1 manifold.
\end{corollary}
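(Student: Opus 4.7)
The strategy is to cover $\fr_d^k$ by $k!$ Bookstein-type charts indexed by permutations of the landmarks. For each $\sigma\in S_k$, let $\B_{d,\sigma}^k\subset\A_d^k$ be the configurations whose landmarks at positions $\sigma(1),\dots,\sigma(d+2)$ form a frame, and let $\b_{d,\sigma}^k\subset\a_d^k$ be the corresponding shape subspace. By the definition of $\fr_d^k$ one has $\fr_d^k=\bigcup_{\sigma\in S_k}\b_{d,\sigma}^k$, with $\b_{d,e}^k=\b_d^k$ for the identity $e$. Since being in general position is the non-vanishing of certain $(d+1)\times(d+1)$ minors of a matrix representative $P$, each $\B_{d,\sigma}^k$ is open in $\A_d^k$; openness of $\pi$ then gives that each $\b_{d,\sigma}^k$ is open in $\a_d^k$, hence in $\fr_d^k$.

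For the manifold structure, Lemma~\ref{Bookstein} applied after relabeling by $\sigma$ provides a homeomorphism $\varphi_\sigma:\b_{d,\sigma}^k\to(\RP^d)^{k-d-2}$: pick the unique projective transformation sending the $\sigma$-selected landmarks to the standard frame and record the images of the remaining $k-d-2$ landmarks. This already yields the correct dimension $d(k-d-2)$, and second-countability is immediate from second-countability of each chart combined with the finiteness of $S_k$. The one substantive ingredient is smoothness of the transition maps: on $\b_{d,\sigma}^k\cap\b_{d,\tau}^k$, the change of coordinates $\varphi_\tau\circ\varphi_\sigma^{-1}$ is computed by taking the $\sigma$-normalized representative, reading off its landmarks at positions $\tau(1),\dots,\tau(d+2)$, and applying the unique element of $\P$ that sends these to the standard frame. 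Since the entries of this element depend polynomially on the frame landmarks (via Cramer's rule applied to a suitable homogeneous lift) and inversion in $\P$ is smooth on its domain of definition, the transition map is smooth wherever it is defined.

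The T1 property then follows directly from what is already in hand. Given distinct shapes $[p]\neq[q]$ in $\fr_d^k$, choose $\sigma\in S_k$ with $[p]\in\b_{d,\sigma}^k$. If $[q]\notin\b_{d,\sigma}^k$, then $\b_{d,\sigma}^k$ is itself an open neighborhood of $[p]$ excluding $[q]$. Otherwise both shapes lie in the Hausdorff manifold $\b_{d,\sigma}^k$, where an open neighborhood of $[p]$ avoiding $[q]$ is readily available. Swapping the roles of the two shapes yields the same conclusion, showing that every singleton in $\fr_d^k$ is closed. The main technical obstacle in this plan is a clean verification of the smoothness of the transition maps; the argument is conceptually routine but requires some care with the diagonal scaling ambiguity inherent in the equivalence $[P]=[DPB]$, which needs to be fixed consistently when passing between the two normalizations.
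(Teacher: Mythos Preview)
Your proof is correct and follows essentially the same approach as the paper: cover $\fr_d^k$ by open Bookstein-type chart domains, invoke Lemma~\ref{Bookstein} for the homeomorphism to $(\RP^d)^{k-d-2}$, and argue that the transition maps are smooth because they amount to multiplication by non-singular matrices and division by non-vanishing coordinates. Minor cosmetic differences: the paper indexes its charts by the $\binom{k}{d+2}$ \emph{subsets} of size $d+2$ rather than by all $k!$ permutations (your atlas is redundant but harmless), and the paper does not spell out the T1 argument at all, since it already follows from being locally Euclidean---your explicit verification is correct but not strictly needed.
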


\begin{proof}
Lemma~\ref{Bookstein} gives homeomorphisms from the topological subspaces of shapes with a frame in a fixed subset of $d+2$ landmarks to $(\RP^d)^{k-d-2}$.
Further, note that these topological subspaces of shapes with a frame in a fixed subset of $d+2$ landmarks are open in $\a_d^k$ and $\fr_d^k$.
Hence, these homeomorphisms are ``manifold-valued'' charts on $\fr_d^k$. 
Ordinary charts on $\fr_d^k$ can easily be obtained by composition with charts on the manifold $(\RP^d)^{k-d-2}$, e.g.\ inhomogeneous coordinates.
These charts are compatible since the transition maps are just multiplications with non-singular matrices as well as division by non-vanishing parameters depending smoothly on the representation matrix.
\end{proof}

Alternatively, one can consider the space of shapes in general position $\g_d^k\subset\b_d^k$ which is also a differentiable Hausdorff manifold, respects the hierarchy of projective subspace constraints, and is closed under relabeling.
The drawback of $\g_d^k$ is that it is not maximal for any $d\geq 1$, $k>4$ as we will see in Section~\ref{SubNum}.

\begin{example}
For $d=1$, a frame consists of three distinct landmarks. Hence, $\B_1^4$ consists of all configurations with distinct first three landmarks and arbitrary fourth landmark. $\b_1^4$ is then homeomorphic to the real projective line $\RP^1$ or---equivalently---the circle.
Meanwhile, $\Fr_1^4$ consists of all configurations with at least three of its landmarks distinct and thus forming a frame (types (a), (b) from Section~\ref{case14}).
$\fr_1^4$ is homeomorphic to a circle with three double points corresponding to the single pair coincidences, which cannot be separated in the Hausdorff sense \cite{KM}.
Finally, $\G_1^4$ consists of configurations with no landmark coincidences (only type (a)), hence $\g_1^4$ is homeomorphic to the circle with three points removed.
\end{example}

A different approach was developed by Kent and Mardia \cite{KM}.
The space $\T_d^k$ of Tyler regular configurations comprises configurations $p$ all of whose projective subspace constraints $(I,j)\in C(p)$ satisfy the inequality $|I|<\tfrac{jk}{d+1}$.
It was shown there that any Tyler regular configuration $p\in\T_d^k$ has a matrix representation $P$ fulfilling
 \begin{equation*}  
P_{i\cdot} P_{i\cdot}^t = \tfrac{d+1}{k} \quad \mbox{ for all } i\in\left\{1,\dots,k\right\}
\end{equation*}
and
\begin{equation*}  
P^t P = \Id_{d+1} 
\end{equation*}
with $\Id_{d+1}$ denoting the $(d+1)\times(d+1)$-dimensional identity matrix.
This so-called Tyler standardization $P$ is unique up to multiplication of the rows $P_{i\cdot}$ by~$\pm 1$ and right-multiplica\-tion by an orthogonal matrix, i.e.\ unique up to a compact group action, and can be viewed as a projective pre-shape.
By considering $PP^t\in\R^{k\times k}$, one can even remove the ambiguity of the $\O(d+1)$-action.
This gives a covering space of the space $\t_d^k$ of Tyler regular shapes.
The covering space is Hausdorff, whence $\t_d^k$ is Hausdorff.

We show in Section~\ref{Geometry}, that $\t_d^k$ is a differentiable Hausdorff manifold;
it is obviously closed under relabeling and respects the hierarchy of projective subspace constraints.
Additionally, we show that $\t_d^k$ is maximal for some, but not all $k$ and $d$.
Note that the approach via frames differs from the approach via Tyler regularity since, for $d\geq3$, there are Tyler regular shapes without a frame, see Figure~\ref{FreeNoFrame}. 

\begin{example}
In the case $d=1$ and $k=4$, $\t_1^4$ consists of shapes with projective subspace constraints $(I,j)$ with $j=1$ and $|I|=1<2=\tfrac{1\cdot 4}{2}$, i.e.\ the shapes in general position (only type (a) from Section~\ref{case14}).
Hence, $\t_1^4=\g_1^4$ with $\g_1^4$ being homeomorphic to the circle with three points removed, as we have seen before.
\end{example}

Neither of these approaches discusses the topological background of these choices. The goal of this article is to shed light on the topology of these topological subspaces of projective shapes.

\section{The manifold of the free}\label{free}

To understand the topology of a topological space $M$, it is vital to know which elements of $M$ cannot be separated from each other by open neighborhoods.
It is common to use the well-known separation axioms to described the degree of separation. Two of those will be discussed here.

A topological space $M$ is said to be \begin{description}[leftmargin=17pt]
\item[$\mathbf{T1}$] if for any two points $p,q\in M$ there are open neighborhoods $U_p$ and $U_q$ of $p$ and $q$ respectively not containing the other point, i.e., $q\notin U_p$ and $p\notin U_q$.
\item[\textbf{Hausdorff} or $\mathbf{T2}$] if for any two points $p,q\in M$ there are disjoint open neighborhoods of $p$ and $q$.
\end{description} 

The intersection of all open neighborhoods to a point $p\in M$ is a useful tool towards understanding the separation properties of a space $M$.
This set was introduced as the \emph{blur} $\Bl(p)$ of $p$ in $M$ by Groisser and Tagare in their discussion of affine shape space \cite{GT}.
We will call a point $p\in M$ \emph{unblurry} if $\Bl(p)=\{p\},$ and \emph{blurry} in the case that its blur is a strict superset of $\{p\}$.

Equivalently, the blur could also be defined via sequences.

\begin{lemma}\label{BlurSeq}
Let $M$ be a topological space and $p,q\in M$. Then, $p\in\Bl(q)$ if and only if the constant sequence $(p)_{n\in\N}$ converges to $q$.
\end{lemma}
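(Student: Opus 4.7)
The plan is to observe that this lemma is essentially an unwinding of definitions; no first-countability assumption is needed because the sequence in question is constant, so convergence reduces to a membership statement about open neighborhoods.

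First I would set up the two directions and recall the definitions explicitly: $\Bl(q)$ is the intersection of all open neighborhoods of $q$, and a sequence $(p_n)_{n\in\N}$ converges to $q$ exactly when for every open $U\ni q$ there is $N\in\N$ with $p_n\in U$ for all $n\geq N$.

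For the forward direction, assume $p\in\Bl(q)$. Then $p\in U$ for every open neighborhood $U$ of $q$. The constant sequence $(p)_{n\in\N}$ has every term equal to $p$, hence every term lies in $U$, so trivially it is eventually in $U$. This holds for every open neighborhood of $q$, so the constant sequence converges to $q$.

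For the backward direction, assume the constant sequence $(p)_{n\in\N}$ converges to $q$. Let $U$ be any open neighborhood of $q$. By the definition of convergence, some tail of the sequence is contained in $U$; but every term of the sequence equals $p$, so $p\in U$. Since $U$ was arbitrary, $p$ lies in the intersection of all open neighborhoods of $q$, i.e., $p\in\Bl(q)$.

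The main obstacle here is simply rhetorical rather than mathematical: the statement is a direct translation between ``lies in every neighborhood'' and ``constant sequence converges,'' so the proof will be only a few lines, and it is worth noting explicitly that no countability hypothesis on $M$ is required precisely because the sequence is constant.
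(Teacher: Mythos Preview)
Your proof is correct and follows essentially the same approach as the paper's own proof, which is a one-line unwinding of the definitions. Your version is simply more explicit, and your remark that no countability hypothesis is needed because the sequence is constant is a nice clarification not made in the paper.
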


\begin{proof}
$p\in\Bl(q)$ if and only if $p$ is in every neighborhood of $q$ which happens if and only if the sequence $(p)_{n\in\N}$ converges to $q$. 
\end{proof}

This concept is closely related to the more familiar concept of closure which has also been pointed out by Groisser and Tagare \cite{GT}.

\begin{lemma}\cite[Lemma 5.2]{GT}
Let $M$ be a topological space and $p,q\in M$. Then, $p\in\Bl(q)$ if and only if $q\in\Cl(p),$ the latter denoting the closure of $\{p\}$ in $M$.
\end{lemma}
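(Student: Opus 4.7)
The plan is to unfold both definitions and observe that the two conditions are literally the same statement about the pair $(p,q)$. Since $\Bl(q)$ is defined as the intersection of all open neighborhoods of $q$, we have $p\in\Bl(q)$ if and only if $p\in U$ for every open neighborhood $U$ of $q$ in $M$. On the other hand, by the standard characterization of the closure of a set via open neighborhoods, $q\in\Cl(\{p\})$ if and only if $U\cap\{p\}\neq\emptyset$ for every open neighborhood $U$ of $q$, which of course simplifies to $p\in U$ for every such $U$ since $\{p\}$ has only one element.

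Hence both sides of the equivalence reduce to the single condition ``every open neighborhood of $q$ contains $p$'', and the lemma follows at once. I would present this as a two-line chain of equivalences, perhaps citing the definition of $\Bl$ introduced just before the lemma for the first equivalence and the usual neighborhood characterization of $\Cl$ for the second. Alternatively, one could give an even shorter proof invoking the previous \texttt{Lemma~\ref{BlurSeq}}: $p\in\Bl(q)$ iff the constant sequence $(p)_{n\in\N}$ converges to $q$, and convergence of the constant sequence $(p)$ to $q$ is equivalent to $q$ being in the closure of $\{p\}$ (since a point lies in the closure of a set iff it is the limit of some net/sequence in that set, and here the only candidate is the constant sequence). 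I would avoid this route, however, because it implicitly uses first-countability or a net-theoretic statement, whereas the direct neighborhood argument works for arbitrary topological spaces.

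There is no real obstacle here: the content of the lemma is essentially the symmetry of the relation ``every neighborhood of one point contains the other''. The only thing to be mildly careful about is to phrase the closure characterization in the neighborhood form (rather than in terms of limits of sequences) so that the proof remains valid without any countability assumption on $M$.
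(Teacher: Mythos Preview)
Your argument is correct: both conditions unwind to ``every open neighborhood of $q$ contains $p$'', and the neighborhood characterization of closure makes this immediate without any countability hypothesis. Note, however, that the paper does not actually prove this lemma; it simply cites \cite[Lemma~5.2]{GT} and moves on, so there is no paper proof to compare against---your two-line direct argument is exactly what one would supply if a proof were required.
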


In particular, every point is unblurry if and only if every point is closed, which in turn is equivalent to the space being T1 \cite{AP}.
This motivates us to take a closer look at the unblurry shapes. 

As it turns out, a shape is blurry if it is splittable; the converse is also true as we will show after Theorem~\ref{MnfOfFree}.

\begin{proposition}\label{blursplit}
Let $[p]\in\sp_d^k$ be a splittable shape. Then $[p]$ is blurry.
More precisely, there exists a $[q]\in\Bl\bigl([p]\bigr)$ that is less degenerate than $[p]$, i.e.\ $C(q)\subsetneq C(p)$.
\end{proposition}

\begin{proof}
We will use Lemma~\ref{BlurSeq}.
First, consider an arbitrary shape $[P]$ with $ \rk P < d+1 $.
There is a non-singular matrix $B\in\GL$ such that $ PB = (P_1,0_k) $ for some $P_1$ and $0_k$ being a column vector of $k$ zeroes.
Of course, $PB$ is still of shape $[P]$.
Then, the sequence $ \bigl((P_1,z)B_n\bigr)_{n\in\N} $ with $ B_n = \mbox{diag}\bigl(1,\ldots,1,\tfrac{1}{n}\bigr) $ and arbitrary $z\in\R^k$ has limit $PB$.
Hence, $\bigl[(P_1,z)\bigr] \in \Bl([P])$ for any $z\in\R^k,$ while there is a $z\in\R^k$ such that $\rk (P_1,z)>\rk P$.
Therefore, $\Bl([P])\neq \{[P]\}$, whence $[P]$ is blurry.

Now, let $[P]\in\sp_d^k$ be of rank $d+1$ with $(I,j),(I^c,d+1-j)\in C(P)$. 
Since $|I|+|I^c|=k>d+2$, w.l.o.g.\ $|I^c|>j$, else $|I|>d+1-j$.
Then there is a suitable permutation $\sigma$ of the rows of $P$ and a suitable non-singular matrix $B\in \GL$ such that the matrix $\hat{P}=\sigma(P) B$ is a block diagonal matrix
\begin{equation}
\hat{P} = \begin{pmatrix}
	P_1 & 0 \\ 0 & P_2
	\label{eq:block}
\end{pmatrix}
\end{equation}  
for some matrices $ P_1 \in \R^{|I|\times j} $ and $ P_2 \in \R^{|I^c|\times (d+1-j)} $. The sequence given by 
\begin{equation*}
\begin{pmatrix}
	n\Id_{|I|} & 0 \\ 0 & \Id_{|I^c|}
\end{pmatrix}
\begin{pmatrix}
	P_1 & 0 \\ Z & P_2
\end{pmatrix}
\begin{pmatrix}
	\tfrac{1}{n}\Id_{j} & 0 \\ 0 & \Id_{d+1-j}
\end{pmatrix}
= \begin{pmatrix}
	P_1 & 0 \\ \tfrac{1}{n}Z & P_2
\end{pmatrix}
\end{equation*} 
has limit $\hat{P}$ for any $ Z \in\R^{|I^c|\times j} $. Hence,
\begin{equation}
\left[\begin{pmatrix}
	P_1 & 0 \\ Z & P_2
\end{pmatrix}\right] \in \Bl\bigl([\hat{P}]\bigr).
\label{eq:blur}
\end{equation} 
Since $j<|I^c|$, there is a $Z\in\R^{|I^c|\times j}$ which breaks a projective subspace constraint of $[\hat{P}]$, whence $\Bl([\hat{P}])\neq \{[\hat{P}]\}$ and consequently $\Bl([P])\neq \{[P]\}$; thus, $[P]$ is blurry.
\end{proof}

\begin{example}\label{ex:blur}
In the case $d=1$ and $k=4$, the topological subspace $\SP_1^4$ of splittable configurations consists of the trivial configurations (all landmarks identical, type (e) from Section~\ref{case14}) and all those comprising only two different landmarks, when either three landmarks coincide (type (d)) or there are two pairs of landmarks coinciding (type (c)).

The blur of the trivial shape $[P_{1=2=3=4}]$ is
\begin{equation*}
\Bl\bigl([P_{1=2=3=4}]\bigr) = \Bl\left(\left[\begin{pmatrix}
	1 & 0 \\
	1 & 0 \\
	1 & 0 \\
	1 & 0 
\end{pmatrix}\right]\right) = \left\{\left[\begin{pmatrix}
	1 & a \\
	1 & b \\
	1 & c \\
	1 & d 
\end{pmatrix}\right] : a,b,c,d\in\R \right\}
\label{eq:}
\end{equation*} which is the full shape space $\a_d^k$.

The shapes comprising only two different landmarks can be represented as block matrices as in Equation~\eqref{eq:block} e.g.\ by mapping the rows in a representing matrix to the standard basis of $\R^2$ by a matrix $B\in\mathbf{GL}(2)$;
thus,
\begin{equation*}
[P_{2=3=4}] = \left[\begin{pmatrix}
	1 & 0 \\
	0 & 1 \\
	0 & 1 \\
	0 & 1 
\end{pmatrix}\right],\quad
[P_{1=2,3=4}] = \left[\begin{pmatrix}
	1 & 0 \\
	1 & 0 \\
	0 & 1 \\
	0 & 1 
\end{pmatrix}\right], \quad \mbox{etc.}
\end{equation*}
The blur of the shape $[P_{2=3=4}]\in\sp_1^4$ comprises then $[P_{2=3=4}]$ and all shapes with fewer projective subspace constraints as Equation~\eqref{eq:blur} in the preceding proof indicates.
Meanwhile, the blur of $[P_{1=2,3=4}]\in\sp_1^4$ comprises $[P_{1=2,3=4}]$ and the single pair coincidences $[P_{1=2}]$ and $[P_{3=4}]$.
\end{example}

Due to Proposition~\ref{blursplit}, we henceforth limit ourselves to the analysis of those configurations (resp.\ shapes) which are not splittable. Those can be characterized algebraically via the group action.

\begin{proposition}\label{SplitFree}
A configuration is free if and only if it is not splittable, i.e.\ $\F_d^k = \A_d^k \setminus \SP_d^k$.
\end{proposition}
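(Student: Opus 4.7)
My plan is to prove both implications separately. The direction $\SP_d^k \cap \A_d^k \supseteq \A_d^k \setminus \F_d^k$ (equivalently, splittable implies non-free) is largely constructive, while the reverse direction requires a spectral argument on the isotropy action. Recall the isotropy condition: $gp = p$ means there is some $B \in \GL$ lifting $g$ together with a non-singular diagonal $D$ such that $PB = DP$; triviality of $g$ in $\P$ amounts to $B$ being a scalar multiple of the identity.

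For the forward direction, let $p \in \SP_d^k$ with witnessing subset $I \subsetneq \{1,\ldots,k\}$ such that $\rk P_I + \rk P_{I^c} \leq d+1$. If $\rk P < d+1$, a column change of basis makes the last $d+1-\rk P$ columns of $P$ vanish, and scaling only those trivial columns by some $\lambda \neq 1$ produces a non-scalar $B \in \GL$ with $PB = P$. If instead $\rk P = d+1$, then since $\rk P \leq \rk P_I + \rk P_{I^c}$, the splittability inequality must be an equality; letting $V_1 = \Span\{P_i^t : i\in I\}$ and $V_2 = \Span\{P_i^t : i \in I^c\}$, a dimension count forces $\R^{d+1} = V_1 \oplus V_2$. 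Defining $B^t$ to act as $\lambda \cdot \mathrm{id}$ on $V_1$ and as $\mathrm{id}$ on $V_2$ for some $\lambda \neq 1$, one checks $PB = DP$ with $D$ being the diagonal matrix whose $i$th entry is $\lambda$ if $i \in I$ and $1$ otherwise. In either case $[B] \neq e$ in $\P$ while $[B]p = p$, so $p \notin \F_d^k$.

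For the converse, assume $p$ is not splittable and let $B \in \GL$ satisfy $PB = DP$ for some non-singular diagonal $D$; the goal is $B = \lambda \cdot \mathbf{I}$. First, applying the non-splittability condition to $I = \{1\}$ forces $\rk P_{I^c} \geq d+1$, so $\rk P = d+1$. Reading $PB = DP$ row-wise gives $B^t P_i^t = d_i P_i^t$, so every transposed row $P_i^t$ is a genuine real eigenvector of $B^t$. Because these rows span $\R^{d+1}$, the real eigenvectors of $B^t$ span $\R^{d+1}$, hence $B^t$ is diagonalisable over $\R$ with distinct real eigenvalues $\lambda_1,\ldots,\lambda_m$ and eigenspace decomposition $\R^{d+1} = E_{\lambda_1} \oplus \cdots \oplus E_{\lambda_m}$. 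Set $I_j = \{i : P_i^t \in E_{\lambda_j}\}$; each $I_j$ must be non-empty, for otherwise all rows would lie in a proper subspace, contradicting $\rk P = d+1$. If $m \geq 2$, taking $I = I_1$ yields $\rk P_I + \rk P_{I^c} \leq \dim E_{\lambda_1} + \sum_{j \geq 2} \dim E_{\lambda_j} = d+1$, contradicting the non-splittability hypothesis. Therefore $m = 1$, $B^t = \lambda_1 \mathbf{I}$, and $[B] = e$ in $\P$.

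The main obstacle will be the bookkeeping in the converse direction, specifically ensuring that the eigenvalues of $B^t$ relevant to the row action are indeed all real and that $B^t$ really is diagonalisable (not merely possessing enough eigenvectors in a generalised sense), which is what allows the eigenspace partition $\{I_j\}$ to supply an honest splitting certificate. The rest is straightforward linear algebra once it is noted that the rows of $P$ are simultaneous real eigenvectors of $B^t$.
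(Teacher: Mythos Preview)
Your proof is correct and follows essentially the same strategy as the paper's: in both cases the forward direction constructs a non-trivial isotropy element from the splitting (you via a direct-sum decomposition $V_1\oplus V_2$, the paper via an explicit block-diagonal form after permutation), and the converse observes that the rows of $P$ are eigenvectors of $B^t$ and uses the resulting eigenspace partition to exhibit a splitting certificate. Your treatment is slightly more careful about the diagonalisability of $B^t$ over $\R$ and the non-emptiness of the index sets $I_j$, points the paper leaves implicit.
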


\begin{proof}
If $\rk P < d+1,$ then $P$ is obviously splittable, but not free. Hence, we will focus on configurations with $\rk P = d+1$. \\
Now, assume there are projective subspace constraints $(I,j),(I^c,d+1-j)$ such that $ \rk P_I + \rk P_{I^c} = \rk P = d+1 $. Then there is a permutation $\sigma$ of the rows and a matrix $B\in \GL$ such that $\sigma(P) B$ is a block diagonal matrix $\left(\begin{smallmatrix} \hat{P}_I & 0 \\ 0 & \hat{P}_{I^c} \end{smallmatrix}\right)$.
Hence, $\sigma(P) B$ is not free since 
\begin{equation*}
\begin{pmatrix} \hat{P}_I & 0 \\ 0 & \hat{P}_{I^c} \end{pmatrix}  
= \begin{pmatrix} \lambda\Id_{|I|} & 0 \\ 0 & \Id_{|I^c|} \end{pmatrix}
\begin{pmatrix} \hat{P}_I & 0 \\ 0 & \hat{P}_{I^c} \end{pmatrix} 
\begin{pmatrix} \lambda^{-1}\Id_{j} & 0 \\ 0 & \Id_{d+1-j} \end{pmatrix} 
\end{equation*}
for any $\lambda\in\R^*$.
Therefore, $\sigma(P)$ has a non-trivial isotropy group, hence so has $P$.

For the opposite direction, assume $P$ is not free.
Then there exists an invertible diagonal matrix $D$ and some $B\in\GL,\,B\neq \lambda \Id_{d+1}$, $\lambda\in\R\setminus\{0\},$ such that  $DPB=P$.
Hence, the rows of $P$ are eigenvectors of $B^t$ with corresponding eigenvalues $\lambda_1,\ldots,\lambda_k$, say (taking at most $d+1$ distinct eigenvalues).
There are at least two distinct eigenvalues, else $B = \lambda_1 \Id_{d+1}$ contradicting the assumption.
Then, $(I,\rk P_I), (I^c,\rk P_{I^c})\in C(P)$ with $I=\{i\,:\,\lambda_i=\lambda_1\}$, while $ \rk P_I + \rk P_{I^c} = d+1 $, whence $P$ is splittable. 
\end{proof}

From Propositions~\ref{blursplit} and \ref{SplitFree} we conclude that the subspace $\f_d^k$ of the free shapes is the largest subspace which is T1 and respects the hierarchy of subspace constraints.

In the case $d=1$, the splittable shapes are those comprising at most two distinct landmarks as we have seen before. Thus, Proposition~\ref{SplitFree} states that a shape $[p]\in\a_1^k$ is free if and only if it has at least three distinct landmarks.
Three distinct landmarks always form a frame for $d=1$.
Indeed, Mardia and Patrangenaru \cite{MP} have shown for any $d\geq 1$ that shapes which include a frame are free, i.e.\ $\fr_d^k\subseteq\f_d^k$.
However, the other inclusion does not hold for $d\geq 3$: e.g. for $d=3$, take three lines, which are not coplanar, but have a common intersection point, and put two landmarks on each line, and another on the intersection point.
Such a configuration of seven landmarks is free, but does not contain a frame since there are no five landmarks in general position, see Figure~\ref{FreeNoFrame}(a).
The same argument works when removing the landmark on the intersection point.
Analogously, a free shape without a frame can be constructed for any $d>3$. 

\begin{figure}
\begin{tikzpicture}[scale=0.4]
	\node at (-4.5,.6) {(a)};
	\draw (0,1) -- (0,-5);
	\draw (.7,.7) -- (-4.2,-4.2);
	\draw (-0.7,.7) -- (4.2,-4.2);
	\draw (0.7,0) node {\textbf{1}};
	\draw (-0.9,-1.6) node {\textbf{5}};
	\draw (0.7,-2) node {\textbf{6}};
	\draw (2.3,-1.6) node {\textbf{7}};
	\draw (-2.8,-3.5) node {\textbf{2}};
	\draw (0.7,-4) node {\textbf{3}};
	\draw (4.2,-3.5) node {\textbf{4}};
	\draw[fill=black] (0,0) circle (0.1cm);
	\draw[fill=black] (-1.6,-1.6) circle (0.1cm);
	\draw[fill=black] (0,-2) circle (0.1cm);
	\draw[fill=black] (1.6,-1.6) circle (0.1cm);
	\draw[fill=black] (-3.5,-3.5) circle (0.1cm);
	\draw[fill=black] (0,-4) circle (0.1cm);
	\draw[fill=black] (3.5,-3.5) circle (0.1cm);
	\draw (10,-2) node {$P=\begin{pmatrix}
	1 & 0 & 0 & 0 \\
	0 & 1 & 0 & 0 \\
	0 & 0 & 1 & 0 \\
	0 & 0 & 0 & 1 \\
	1 & 1 & 0 & 0 \\
	1 & 0 & 1 & 0 \\
	1 & 0 & 0 & 1
\end{pmatrix}$};

\node at (17,.6) {(b)};
\draw (19,-2) node {\textbf{$G(P)$}};
\draw[fill=black] (23,-.5) node[anchor=south] {\textbf{1}} circle (0.1cm);
\draw[fill=black] (26,-3.5) node[anchor=north] {\textbf{4}} circle (0.1cm);
\draw[fill=black] (23,-3.5) node[anchor=north] {\textbf{3}} circle (0.1cm);
\draw[fill=black] (20,-3.5) node[anchor=north] {\textbf{2}} circle (0.1cm);
\draw (23,-.5) -- node[anchor=west] {\textbf{6}} (23,-3.5);
\draw (23,-.5) -- node[anchor=west] {\textbf{7}} (26,-3.5);
\draw (23,-.5) -- node[anchor=east] {\textbf{5}} (20,-3.5);
	
\end{tikzpicture}
\caption{(a) A free configuration $P$ in $\RP^3$ without a frame. The seven landmarks lie on three non-coplanar lines; landmark~1 is the intersection point. (b) Its graph $G(P)$ corresponding to the first 4 landmarks in general position.}
\label{FreeNoFrame}
\end{figure}

Hence, having a frame is not essential for a shape to be free.
While frames can be used as charts on $\fr_d^k\subseteq\f_d^k$, this is not possible for $\f_d^k$ for $d\geq 3$ since the charts associated with frames do not cover $\f_d^k$ for $d\geq 3$.
However, the notion of a frame can be generalized to obtain charts on $\f_d^k$ as follows.

A free configuration contains at least $d+1$ landmarks in general position since a free configuration is of full rank.
Now, a configuration $P=\big(\begin{smallmatrix}P_0 \\ P_1\end{smallmatrix}\big),$ whose first $d+1$ landmarks $P_0$, say, are in general position, i.e.\ $P_0\in\G_d^{d+1}$, is equivalent to a matrix of the form
\begin{equation}
\tilde{P} = 
\begin{pmatrix}
	\Id_{d+1} \\
	P_*
\end{pmatrix},
\label{graphform}
\end{equation}
where $P_*=P_1 P_0^{-1}$ consists of non-trivial rows.
For such a configuration $P$ define its corresponding (undirected) edge-colored \emph{graph} $G(P) = \big(V(P),E(P)\big)$ by taking the columns of $\tilde{P}$ as vertices, i.e.\ $V(P) = \{ 1,\ldots,d+1 \}$, and letting there be an edge labeled with color ``$l$'' between the vertices $i,j$ if both $\tilde{P}_{li}\neq 0$ and $\tilde{P}_{lj}\neq 0$ (see Figure~\ref{FreeNoFrame}(b) as an example).
Note that there may be multiple edges of different colors between two vertices.
We denote the set of edges of color ``$l$'' by $E_l\subseteq E(P)$, $l\in\{d+2,\ldots,k\}$.

This definition of the graph of a configuration with the first $d+1$ landmarks in general position is well-defined and invariant under $\P$:
let $Q=DPB$ be an equivalent configuration, $D_0$ be the upper left square block of $D$ with $d+1$ rows, $D_1$ be the lower right square block of $D$ with $k-d-1$ rows, $P_0$ be the first $d+1$ landmarks of $P$, $P_1$ be the last $k-d-1$ landmarks of $P$.
Then $Q_*$ in $\tilde{Q}$ is given by 
\begin{equation*}
D_1 P_1 B \bigl(D_0 P_0 B\bigr)^{-1} = D_1 P_1 P_0^{-1} D_0^{-1} = D_1 P_* D_0^{-1}.
\end{equation*} 
Hence, $P_*$ is unique up to left- and right-multiplication by non-singular diagonal matrices.
But these actions do not affect the graph.

This definition can easily be extended to any configuration $P$ with a given set of $d+1$ landmarks in general position.

Now, we can connect freeness with graph properties.

\begin{proposition}\label{Free_GraphConnected}
Let $P$ be a configuration whose first $d+1$ landmarks are in general position. Then $P$ is free if and only if $G(P)$ is connected. 
\end{proposition}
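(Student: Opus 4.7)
The plan is to combine Proposition~\ref{SplitFree} with a careful analysis of what splittability forces on a configuration in the graph-form \eqref{graphform}. By Proposition~\ref{SplitFree}, freeness of $P$ is equivalent to $P$ being not splittable, so it suffices to prove that $G(P)$ is disconnected if and only if $P$ is splittable. Throughout I work with the normalised representative $\tilde P = \bigl(\begin{smallmatrix}\Id_{d+1}\\P_*\end{smallmatrix}\bigr)$; this doesn't affect the graph or splittability, both of which are invariant under the $\P$-action.

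For the direction \emph{$G(P)$ disconnected $\Rightarrow$ $P$ splittable}, I would partition the vertex set $V(P) = \{1,\ldots,d+1\}$ as $V_1 \sqcup V_2$ with no edge between the two parts. By definition of $G(P)$, every row of $P_*$ must have its nonzero entries supported entirely in columns indexed by $V_1$ or entirely in columns indexed by $V_2$ (rows of $P_*$ are nontrivial). This partitions the index set $\{d+2,\ldots,k\}$ as $L_1 \sqcup L_2$. Setting $I = V_1 \cup L_1$ and $I^c = V_2 \cup L_2$, the rows of $\tilde P_I$ lie in $\Span\{e_i : i\in V_1\}$ and the rows of $\tilde P_{I^c}$ lie in $\Span\{e_i : i\in V_2\}$, giving $\rk P_I \leq |V_1|$, $\rk P_{I^c} \leq |V_2|$, and thus $\rk P_I + \rk P_{I^c} \leq d+1$; so $P \in \SP_d^k$.

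For the converse \emph{$P$ splittable $\Rightarrow$ $G(P)$ disconnected}, assume $(I,j), (I^c, d+1-j) \in C(P)$ with $\rk P_I + \rk P_{I^c} = d+1$ (we may assume $\rk P = d+1$, since otherwise $P$ cannot have its first $d+1$ landmarks in general position). Set $V_1 := I \cap \{1,\ldots,d+1\}$ and $V_2 := I^c \cap \{1,\ldots,d+1\}$. The key observation is that the standard basis vectors $e_i$ for $i \in V_1$ appear as rows of $\tilde P_I$, forcing $|V_1| \leq \rk P_I = j$; the symmetric bound together with $|V_1|+|V_2|=d+1=j+(d+1-j)$ gives equalities $|V_1|=j$, $|V_2|=d+1-j$. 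Consequently the $j$-dimensional row space of $\tilde P_I$ is exactly $\Span\{e_i : i \in V_1\}$, so every row of $\tilde P_I$, in particular every row indexed by $I\cap\{d+2,\ldots,k\}$, has zero entries in the columns indexed by $V_2$; symmetrically for $I^c$. Hence no edge of $G(P)$ joins $V_1$ to $V_2$, and both parts are nonempty since $1\le j \le d$ (ranks of a nonempty configuration are at least $1$). So $G(P)$ is disconnected.

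The main obstacle is the forward implication above: one must exploit the specific normalisation $\tilde P_0 = \Id_{d+1}$ to promote abstract rank constraints into honest coordinate-subspace constraints, which is what pins down the vertex partition. Once that is done, everything else is a matter of bookkeeping between row indices and edge labels.
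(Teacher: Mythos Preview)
Your proof is correct, but it departs from the paper's argument in one direction. Both proofs handle ``$G(P)$ disconnected $\Rightarrow$ $P$ not free'' identically: the vertex partition produces a splitting of the columns of $\tilde P$, and Proposition~\ref{SplitFree} finishes it. The divergence is in the converse. The paper proves ``$G(P)$ connected $\Rightarrow$ $P$ free'' directly: assuming $D\tilde PB=\tilde P$, the identity block forces $B=\mbox{diag}(\lambda_1^{-1},\dots,\lambda_{d+1}^{-1})$, and then each edge $\{i,j\}$ labelled $l$ forces $\lambda_i=\lambda_j$ (both equal $\lambda_l^{-1}$); connectedness makes all $\lambda_i$ equal, so $B$ is scalar. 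You instead prove the contrapositive by showing ``$P$ splittable $\Rightarrow$ $G(P)$ disconnected'', using the normalisation $\tilde P_0=\Id_{d+1}$ to pin the row spaces of $\tilde P_I$ and $\tilde P_{I^c}$ to complementary coordinate subspaces, and then appealing to Proposition~\ref{SplitFree} again. Your route is structurally cleaner---it reduces the proposition to the purely combinatorial equivalence ``splittable $\Leftrightarrow$ disconnected''---but it leans on the harder (eigenvector) direction of Proposition~\ref{SplitFree}, whereas the paper's direct isotropy computation needs only the easy direction and is in that sense more self-contained.
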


\begin{proof}
If $G(P)$ is not connected, then the columns of $\tilde{P}$ split into two disconnected sets, so $\tilde{P}$ is splittable, as is $P$, hence not free according to Proposition~\ref{SplitFree}.

Now, suppose that $G(P)$ is connected.
We assume that $P=\tilde{P}$ in Equation~\eqref{graphform} w.l.o.g. 
Further, assume that there exist matrices $D=\mbox{diag}(\lambda_1,\ldots,\lambda_k)$ and $B\in\GL$ such that $DPB=P$.
Then, $B = \mbox{diag}(\lambda_1^{-1},\ldots,\lambda_{d+1}^{-1})$ since Equation \eqref{graphform} implies 
\begin{equation*} 
\mbox{diag}(\lambda_1,\ldots,\lambda_{d+1}) \Id_{d+1}B = \Id_{d+1}
\end{equation*} 
for the first $d+1$ rows of $P$.
For any two connected columns $i,j,$ there is a row $P_l$ such that both $P_{li}\neq 0$ and $P_{lj}\neq 0$. 
Then, $P_{li} = \lambda_l P_{li} \lambda_i^{-1}$ and $P_{lj} = \lambda_l P_{lj} \lambda_j^{-1}$.
From this we conclude 
\begin{equation*}
\lambda_l\lambda_i^{-1}=\lambda_l\lambda_j^{-1}=1,
\end{equation*}
and thus $\lambda_1 = \ldots = \lambda_{d+1}$ since all columns are connected, so $D=\lambda_1 \Id_{k}$ and $B=\lambda_1^{-1} \Id_{d+1}$, i.e., $P$ is free.
\end{proof}

In the following, we will call $d+1$ landmarks in general position together with a connected \emph{tree} $G$ with edges labeled with the remaining landmarks a \emph{pseudo-frame}.
So $G$ contains no cycles and gets disconnected if an edge is removed whence it is a minimal substructure of a connected graph.
This generalizes the idea of a ``frame'' since a frame is a pseudo-frame with a connected tree on $d+1$ landmarks in general position where all edges are labeled with the same landmark (see Figure~\ref{Frame}),  i.e., a uni-colored tree gives rise to a frame.
\begin{figure}
\begin{center}
\begin{tikzpicture}[scale=0.4]
	\draw[fill=black] (-2,-2)  node[anchor=north east] {\textbf{4}} circle (0.1cm);
	\draw[fill=black] (2,2) node[anchor=south west] {\textbf{2}} circle (0.1cm);
	\draw[fill=black] (-2,2) node[anchor=south east] {\textbf{3}} circle (0.1cm);
	\draw[fill=black] (2,-2) node[anchor=north west] {\textbf{1}} circle (0.1cm);
	\draw (2,-2) -- node[anchor=north] {\textbf{5}} (-2,-2);
	\draw (2,-2) -- node[anchor=west] {\textbf{5}} (2,2);
	\draw (2,-2) -- node[anchor=east] {\textbf{5}} (0,0) -- (-2,2);
	\draw (-2,-2) -- (0,0) -- node[anchor=east] {\textbf{5}} (2,2);
	\draw (-2,-2) -- node[anchor=east] {\textbf{5}} (-2,2);
	\draw (2,2) -- node[anchor=south] {\textbf{5}} (-2,2);
	\draw (-4.5,0) node {\textbf{$G(P)$}};
	\draw (-12,0) node {$P=\begin{pmatrix}
	1 & 0 & 0 & 0 \\
	0 & 1 & 0 & 0 \\
	0 & 0 & 1 & 0 \\
	0 & 0 & 0 & 1 \\
	1 & 1 & 1 & 1
\end{pmatrix}$};
	
\end{tikzpicture}
\end{center}
\caption{A frame $P$ and its graph $G(P)$ which is a complete graph. All spanning trees of $G(P)$ give a pseudo-frame.}
\label{Frame}
\end{figure}
We will say that a configuration $p$ (resp.\ shape $[p]$) contains a pseudo-frame $\big(\{i_1,\ldots,i_{d+1}\},G\big)$ if $p_{i_1},\ldots,p_{i_{d+1}}$ are in general position and the corresponding graph to this configuration (resp.\ shape) has the tree $G$ as a subgraph.
We conclude from Proposition~\ref{Free_GraphConnected} that every free shape contains a pseudo-frame. 

Since pseudo-frames are a generalization of frames, we obtain a topological Hausdorff subspace when considering all shapes containing a fixed pseudo-frame, thus generalizing the definition of $\b_d^k$ and Lemma~\ref{Bookstein}:
denote the number of edges in the tree $G=(\{i_1,\ldots,i_{d+1}\},E)$ labeled with the landmark $l$ by $|E_l|,$ and define $\# E = \bigl|\{l\,:\,E_l\neq\emptyset\}\bigr|$.

\begin{proposition}\label{pseudobookstein}
The topological subspace of all shapes containing a certain pseudo-frame \newline$\bigl(\{i_1,\ldots,i_{d+1}\},G\bigr)$ is ho\-meo\-mor\-phic to the $d(k-d-2)$-dimensional differentiable Hausdorff manifold 
\begin{equation}\label{eq:pseudobookstein}
\bigl(\RP^d\bigr)^{k-d-1-\# E} \times \bigtimes_{\substack{l=1: \\ l\notin\{i_1,\ldots,i_{d+1}\} \\ E_l\neq\emptyset}}^k \R^{d - |E_l|}.
\end{equation}
\end{proposition}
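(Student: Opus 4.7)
The plan is to construct an explicit homeomorphism between this subspace and the product manifold in~\eqref{eq:pseudobookstein} by normalizing a canonical representative of each shape. Without loss of generality assume $\{i_1,\ldots,i_{d+1}\}=\{1,\ldots,d+1\}$; since these $d+1$ landmarks are in general position in every representative, each shape in the subspace admits a representative of the form $\tilde P=\big(\begin{smallmatrix}\Id_{d+1}\\P_*\end{smallmatrix}\big)$ as in Equation~\eqref{graphform}. The subgroup of $\P$ preserving this block form consists of pairs $(D,B)$ with $D=\mbox{diag}(\lambda_1,\dots,\lambda_k)$ and $B=\mbox{diag}(\lambda_1^{-1},\dots,\lambda_{d+1}^{-1})$; it acts on $P_*$ by $(P_*)_{l,i}\mapsto\lambda_l(P_*)_{l,i}/\lambda_i$ for $l>d+1$, $i\leq d+1$, with the single redundancy $\lambda_1=\cdots=\lambda_k$. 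The pseudo-frame condition $G\subseteq G(\tilde P)$ forces $(P_*)_{l,i}\neq 0$ whenever the colour $l$ labels an edge of $G$ incident to column~$i$.

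Next I would use the tree $G$ to bring $P_*$ into a canonical form. Root $G$ at vertex $1$ and, for each non-root vertex $j$, let $c(e_j)$ denote the label of the edge from $j$ to its parent. Walking $G$ in breadth-first order, successively solve for the column scalings $\lambda_2,\dots,\lambda_{d+1}$ (relative to the free $\lambda_1$) so that $(P_*)_{c(e_j),j}=(P_*)_{c(e_j),\mathrm{parent}(j)}$ for every non-root $j$; the system is triangular because each $\lambda_j$ appears in a single new equation, and the row scaling $\lambda_{c(e_j)}$ cancels from such a ratio. Then, for each colour $l$ with $E_l\neq\emptyset$, use $\lambda_l$ to scale the now common tree value of row $l$ to $1$, so that $(P_*)_{l,v}=1$ for all $v\in V(F_l)$. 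After this normalization each row $l$ with $E_l\neq\emptyset$ has $|V(F_l)|=|E_l|+1$ entries equal to $1$ and $d-|E_l|$ free real entries; for $l$ with $E_l=\emptyset$ no scaling has been spent, so row $l$ modulo its still free row scaling $\lambda_l$ is an arbitrary point of $\RP^d$.

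Collecting these contributions defines a bijection from the subspace to $(\RP^d)^{k-d-1-\#E}\times\bigtimes_{l\colon E_l\neq\emptyset}\R^{d-|E_l|}$. It is continuous because the normalization is a rational function whose denominators are tree entries of $P_*$, which are non-vanishing on the subspace, and its inverse simply inserts the given parameters (and $1$'s at tree positions) into $\tilde P$. The main obstacle is verifying Step~2: one needs the tree-walk normalization to be consistent and to exhaust the orbit of each shape, which hinges on the triangular structure of the equations along the rooted tree and on each colour class $F_l$ being a connected subtree of $G$, so that a single row scaling $\lambda_l$ can normalize all of $V(F_l)$ simultaneously. In this case the construction yields the claimed homeomorphism and exhibits the topological subspace as a differentiable Hausdorff manifold of dimension $d(k-d-2)$.
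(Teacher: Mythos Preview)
Your approach coincides with the paper's: both bring $P_*$ into canonical form by using the residual diagonal freedom to set the tree-determined entries to~$1$, and then read off the product structure row by row. The paper's proof is much terser, but it follows exactly this normalization idea; your rooted-tree walk simply makes the order of the rescalings explicit.

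You are right to isolate the connectedness of each colour class $F_l$ as the crux. This assumption is equally implicit in the paper's argument: the count of ``$|E_l|+1$ entries set to $1$ in row $l$'' equals $|V(F_l)|$ only when $F_l$ is a connected subtree. The paper's definition of pseudo-frame does not impose this, and without it the stated homeomorphism can actually fail. For instance, with $d=4$ take $G$ to be the path on $\{1,\ldots,5\}$ whose successive edges carry the colours $6,7,6,8$, so that $F_6$ consists of the two non-adjacent edges $\{1,2\}$ and $\{3,4\}$. On the subspace of shapes containing this pseudo-frame, the quantity $(P_*)_{6,3}(P_*)_{7,2}\big/\bigl((P_*)_{7,3}(P_*)_{6,2}\bigr)$ is invariant under the residual diagonal action and is everywhere nonzero, so its sign disconnects the subspace; hence it cannot be homeomorphic to the connected product in~\eqref{eq:pseudobookstein}. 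The obstacle you flag is therefore genuine and not removable in general---it is needed for your argument and for the paper's alike.
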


\begin{proof}
The final factor of the product in Equation~\eqref{eq:pseudobookstein} has dimension $d(\# E-1)$ since $\sum_{l}{|E_l|} = d$ is the number of edges in the tree $G$ with $d+1$ vertices.
This explains the dimension of the manifold.

To show the homeomorphy, consider for a shape $[P]$ (after reordering the rows) a representative of the form in Equation~\eqref{graphform}. Obviously, the rows of $P_*$ which are not used for the graph give us the first factor of the product in Equation~\eqref{eq:pseudobookstein}. By rescaling of rows and columns the non-zero entries determined by the labeled tree are w.l.o.g.\ equal to 1, and the rest of the row may be filled with any real number, hence we obtain $\R^{d+1-(|E_l| + 1)} = \R^{d - |E_l|}$ for row $l$ if $|E_l|\neq 0$.
\end{proof}

Now, Proposition~\ref{pseudobookstein} gives us finitely many, manifold-valued charts for $\f_d^k$ whence it is a differentiable manifold.

\begin{theorem}\label{MnfOfFree}
$\f_d^k$ is a $d(k-d-2)$-dimen\-sional differentiable T1 manifold.
\end{theorem}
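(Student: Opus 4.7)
The plan is to equip $\f_d^k$ with a finite atlas of pseudo-frame charts and then to deduce T1 from the fact that each chart is Hausdorff. The underlying observation is that every free shape contains at least one pseudo-frame: by Proposition~\ref{SplitFree} a free configuration is of full rank, so some $d+1$ of its landmarks are in general position, and Proposition~\ref{Free_GraphConnected} shows that the associated graph is then connected. Any spanning tree of this graph, together with the chosen $d+1$ landmarks, constitutes a pseudo-frame contained in $[p]$.

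For each pseudo-frame $\bigl(\{i_1,\ldots,i_{d+1}\},G\bigr)$ I would consider the set $U_G\subseteq\f_d^k$ of shapes containing it. The defining conditions---the chosen $d+1$ landmarks being in general position and each tree-edge imposing a non-vanishing entry in the normalized matrix representative of the form \eqref{graphform}---translate by Cramer's rule into the non-vanishing of finitely many $(d+1)\times(d+1)$-minors of the representation matrix, which are open conditions invariant under the group action. Hence each $U_G$ is open in $\f_d^k$. Proposition~\ref{pseudobookstein} identifies $U_G$ with a $d(k-d-2)$-dimensional differentiable Hausdorff manifold; composing with ordinary coordinate charts on that manifold (e.g.\ inhomogeneous coordinates on the $\RP^d$-factors and the identity on the $\R^{d-|E_l|}$-factors), exactly as in the proof of Corollary~\ref{frameMnf}, turns each $U_G$ into an ordinary coordinate chart on $\f_d^k$. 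Only finitely many pseudo-frames exist, so the atlas is finite, and second-countability is inherited from $\a_d^k$. Moving from one pseudo-frame to another amounts to left- and right-multiplication by the inverse of the new distinguished $(d+1)\times(d+1)$-block of the representative followed by rescaling of the remaining rows to restore the ``$1$''-entries prescribed by the new tree; these operations are rational with non-vanishing denominators on the overlap and hence smooth, so the transition maps are smooth.

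T1 then follows without invoking the (yet unproven) converse of Proposition~\ref{blursplit}. Since each $U_G$ is open in $\f_d^k$ and Hausdorff, $\f_d^k$ is locally Hausdorff; for any two distinct $[p],[q]\in\f_d^k$ one picks some $U_G\ni[p]$, and either $[q]\notin U_G$ (so $U_G$ already separates $[p]$ from $[q]$), or $[q]\in U_G$, in which case Hausdorffness of $U_G$ yields disjoint open neighborhoods that are also open in $\f_d^k$. The main technical step is verifying smoothness of the transition maps in full generality, but once the change of the distinguished $(d+1)\times(d+1)$-block is written down explicitly this reduces to elementary manipulations with non-singular matrices and non-vanishing diagonal rescalings.
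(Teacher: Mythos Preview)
Your proposal is correct and follows essentially the same route as the paper: cover $\f_d^k$ by the finitely many pseudo-frame domains furnished by Proposition~\ref{pseudobookstein}, compose with ordinary charts, and observe that the transition maps are smooth because they amount to multiplication by non-singular (diagonal) matrices depending smoothly on the representative. Your write-up is in fact more explicit than the paper's in two respects---you spell out via Cramer's rule why the chart domains $U_G$ are open, and you give a clean locally-Hausdorff-implies-T1 argument that avoids any circularity with the converse of Proposition~\ref{blursplit}---but the underlying strategy is the same.
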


\begin{proof}
From Proposition~\ref{pseudobookstein} we obtain homeomorphisms from open subsets of $\f_d^k$ to a differentiable manifold.
When composing those with charts of the differentiable manifold, we obtain charts on $\f_d^k$ whose domains cover the full space.
Since the transition maps between these charts are just multiplications from left and right with non-singular diagonal and non-singular matrices depending smoothly on the representation matrix, the manifold is indeed differentiable.
\end{proof}

We would like to point out that for $d=1$ the concept of pseudo-frames adds no extra insight, since a pseudo-frame is already a frame in this case (any colored tree with $d+1=2$ vertices is uni-colored).
For $d=2,$ any shape with a pseudo-frame already contains a frame, i.e.\ $\f_d^k=\fr_d^k$ for $d=1,2$.
The critical shape to consider in the case $d=2$ is (in the form of Equation~\eqref{graphform})
\begin{equation*}[p]=
\left[\begin{pmatrix}
	1 & 0 & 0 \\
	0 & 1 & 0 \\
	0 & 0 & 1 \\
	u & v & w \\
	x & y & z \\
	& \vdots &
\end{pmatrix}\right].
\end{equation*}
Let w.l.o.g.\ there be a pseudo-frame in the first five rows of $[p]$.
If either all of $u,v,w\neq0$ or all of $x,y,z\neq0$, then $[p]$ contains a frame.
So, let there be a vanishing value in both of the rows.
Since there is a pseudo-frame in the first five rows of $[p]$, there is at most one vanishing value in each row, and it cannot be in the same column.
For the sake of argument, let $w=x=0$.
Then, $p_{\{1,3,4,5\}}$ is a frame.
Thus, charts stemming from frames suffice to cover $\f_2^k,$ while pseudo-frames give a larger atlas on $\f_2^k$.

From Theorem~\ref{MnfOfFree} follows that free shapes $[p]\in\f_d^k$ are unblurry which is the converse direction of Proposition~\ref{blursplit}:
$\f_d^k$ is open in $\a_d^k$ since $\F_d^k$ is open in $\A_d^k$ and $\pi:\A_d^k\rightarrow\a_d^k$ is an open map.
Hence, neighborhoods of $[p]$ in $\f_d^k$ are already neighborhoods of $[p]$ in $\a_d^k$. Now, $\f_d^k$ is T1 by Theorem~\ref{MnfOfFree} whence the intersection of all neighborhoods of $[p]$ in $\f_d^k$ is just $\{[p]\}$, so is the intersection of all neighborhoods of $[p]$ in $\a_d^k$.
Hence, $\Bl([p])=\{[p]\}$.

Unfortunately, the manifold $\f_d^k$ of the free is never Hausdorff for $d\geq 1$ and $k\geq d+3$.
Even the subset $\fr_d^k$ is never Hausdorff for any $d\geq 1$ and $k\geq d+3$ which will follow from Proposition~\ref{Hausdorff}.
Note, however, that all open subsets of $\f_d^k$ are differentiable manifolds by Theorem~\ref{MnfOfFree}.
In particular, all topological subspaces $\mathpzc{y}=\mathpzc{Y}\big/\P$ of $\f_d^k$ respecting the hierarchy of projective subspace constraints are differentiable manifolds, since then any configuration $p\in \mathpzc{Y}$ has an open neighborhood $U\ni p$ with $U\subseteq \mathpzc{Y}$ whence $\mathpzc{Y}$ is open in $\A_d^k$ and $\mathpzc{y}$ is open in $\a_d^k$.

The situation in similarity resp.\ affine shape space is not as complicated \cite{KBCL,PM}: in both cases, the full shape space is not T1.
The largest T1 space in similarity shape space is the full space without the trivial shape, while in affine shape space it is the subspace of the free just like in projective shape space.
In both cases, the subspace of the free is a differentiable manifold and, in contrast to the projective situation, Hausdorff.

Note that Theorem~\ref{MnfOfFree} in connection with the decomposition $\a_d^k=\f_d^k\cup\sp_d^k$ (Proposition~\ref{SplitFree}) leads to a non-Hausdorff stratification of projective shape space since splittable shapes can be understood as products of lower-dimen\-sion\-al free shapes \cite{K3}.

\section{Hausdorff subsets}\label{HausdorffSubsets}

In applications, one is often interested in metric comparisons of different shapes. Therefore, the underlying shape space needs to be a metrizable topological space (e.g.\ a Riemannian manifold) which is---of course---at least Hausdorff. Hence, we are looking for topological Hausdorff subspaces of projective shape space.

Consider a shape $[P]$ which fulfills the projective subspace constraint $(\{1,\dots,i\},j)$, which may be trivial or non-trivial, i.e., $[P]$ has a representative
\begin{equation*}
P = \begin{pmatrix}
	P_1 & 0 \\
	Z & P_2
\end{pmatrix}
\end{equation*}
for some matrices $ P_1 \in \R^{i\times j} ,$ $ P_2 \in \R^{(k-i)\times (d+1-j)} $, and $ Z \in \R^{(k-i)\times j} $ ($Z$ possibly being zero). 
Additionally, consider the sequence $([P_n])_{n\in\N}$ with
\begin{align*}
P_n & = \begin{pmatrix}
	P_1 & \tfrac1n Y \\
	Z & P_2
\end{pmatrix} \\
& = \begin{pmatrix}
	\Id_{i} & 0 \\
	0 & n \Id_{k-i}
\end{pmatrix} \begin{pmatrix}
	P_1 & Y \\
	\tfrac1n Z & P_2
\end{pmatrix} \begin{pmatrix}
	\Id_j &  0 \\
	0 & \tfrac1n \Id_{d+1-j}
\end{pmatrix}
\end{align*} for some $Y\in\R^{i\times (d+1-j)}$.
This sequence converges to $[P]$ and to $[Q]$ with
\begin{equation*}
Q = \begin{pmatrix}
	P_1 & Y \\
	0 & P_2
\end{pmatrix}
\end{equation*}
as $n$ goes to infinity.
But $[P]\neq[Q]$ for some choices for $Y,Z$ as $Y,Z$ may break some projective subspace constraint.
Hence, a topological subspace of $\a_d^k$ containing such $[P],$ $[Q]$ and $[P_n]$ for all $n\in\N$ would not be Hausdorff since sequences in Hausdorff spaces have at most one limit point.
Note that $Q$ fulfills the projective subspace constraint $(\{i+1,\dots,k\},d+1-j)$.

This observation can be strengthened to the following result for determining if a projective subspace of $\a_d^k$ is Hausdorff.

\begin{proposition}\label{Hausdorff}
Let $  \mathpzc{y}\subseteq \frk_d^k$ be a topological subspace which contains $\g_d^k$ and is not Hausdorff.
Then, there are two shapes $[p],[q]\in \mathpzc{y}$ with $[p]\neq[q],$ $(I,j)\in C(p)$ and $(I^c,d+1-j)\in C(q)$.
More precisely, $\mathpzc{y}$ is not Hausdorff if and only if there are two distinct shapes $[p],[q]\in \mathpzc{y}$ which after simultaneous reordering of rows have the form 
\begin{align}
[p] & = \left[ \begin{pmatrix}
	P_{11} & P_{12} & \dots & P_{1m} \\
	0 & \vdots & \ddots & \vdots \\
	\vdots & P_{l-1,2} & \dots & P_{l-1,m} \\
	0 & \dots & 0 & P_{lm}
\end{pmatrix} \right] 
\label{T2p}
\intertext{and}
[q] & = \left[ \begin{pmatrix}
	D_1P_{11}B_1 & 0 & \dots & 0 \\
	Q_{21} & \dots & Q_{2,m-1} & \vdots \\
	\vdots & \ddots & \vdots & 0 \\
	Q_{l1} & \dots & Q_{l,m-1} & D_lP_{lm}B_m
\end{pmatrix} \right],
\label{T2q}
\end{align}
where $P_{rs}, Q_{rs}$ are matrices of the same dimensions, and 
\begin{enumerate} [label=(\roman*)]
	\item $l,m>1$ since $[p]\neq[q],$
	\item if $P_{rs},Q_{rs}\neq 0$, then $Q_{rs} = D_rP_{rs}B_s$ with $D_r$ diagonal and non-singular, $B_s$ non-singular, 
	\item $P_{rs}=0$ if there is a pair $(a,b)\neq(r,s)$ with $a\leq r,$ $b\geq s$ and $Q_{ab}\neq 0$,
	\item $Q_{rs}=0$ if there is a pair $(a,b)\neq(r,s)$ with $a\geq r,$ $b\leq s$ and $P_{ab}\neq 0$.
\end{enumerate}
\end{proposition}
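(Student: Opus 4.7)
The plan is to prove both implications of the characterisation; the weaker first assertion of the proposition then follows from the detailed block characterisation by reading off complementary projective subspace constraints from the zero blocks of (\ref{T2p}) and (\ref{T2q}) (concretely, the zeros in the first block-column of (\ref{T2p}) give a constraint on a subset $I$ of rows, and the zeros in the first block-row of (\ref{T2q}) give a complementary constraint on $I^c$).

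For the ``if'' direction, given representatives $P$ and $Q$ of the prescribed form, I would generalise the two-block construction given immediately before the statement. The idea is to build an overlay matrix $R$ whose $(r,s)$-block collects the nonzero entries of both $P_{rs}$ and $Q_{rs}$ (consistent on the anti-diagonal by condition (ii)), and to introduce block-diagonal scalings $E_n=\mbox{blockdiag}(n^{\alpha_r}\Id)$ and $F_n=\mbox{blockdiag}(n^{\beta_s}\Id)$ with exponents satisfying $\alpha_r+\beta_s=0$ on the shared positions, strictly negative on the $Q$-only positions and strictly positive on the $P$-only positions. Then the shape $[R]$ admits two families of matrix representatives, related by the diagonal action of $E_n,F_n$ and their inverses, in which the $Q$-only blocks fade to zero in one representative (so it converges to $P$) and the $P$-only blocks fade in the other (converging to $Q$). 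This yields a single shape sequence $[P_n]\subseteq\mathpzc{y}$ with both $[p]$ and $[q]$ as limits, so $\mathpzc{y}$ cannot be Hausdorff.

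For the ``only if'' direction, which is the main content, suppose $\mathpzc{y}$ is not Hausdorff: there exist distinct $[p],[q]\in\mathpzc{y}$ together with a sequence $[P_n]$ with $[P_n]\to[p]$ and $[P_n]\to[q]$. Lifting, choose representatives $P_n\to P$ and find diagonal $D_n\in\mathbf{GL}(k)$, $B_n\in\GL$ with $D_nP_nB_n\to Q$. Using the singular value decomposition $B_n=O_nS_nV_n^T$ and compactness of the orthogonal group, pass to a subsequence with $O_n\to O$, $V_n\to V$; absorbing $O$ into the representative of $[p]$ and $V$ into that of $[q]$, one may assume $B_n=S_n$ is positive diagonal. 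Normalising the scalar factor via $\P=\GL/(\R\setminus\{0\})$ and passing to a further subsequence, every ratio $(D_n)_{ii}/(D_n)_{jj}$ and $(S_n)_{ii}/(S_n)_{jj}$ has a limit in $[0,\infty]$.

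From these limits, partition the row indices into clusters $I_1,\ldots,I_l$ and the column indices into $J_1,\ldots,J_m$ so that within each cluster the growth rates are asymptotically comparable, and order the clusters by increasing growth. Each block $(I_r,J_s)$ then has a common combined growth exponent $\gamma_{r,s}$ for the products $(D_n)_{ii}(S_n)_{jj}$. Comparing the entrywise limits $P_n\to P$ and $D_nP_nS_n\to Q$ forces blockwise: $\gamma_{r,s}>0$ implies $P_{rs}=0$; $\gamma_{r,s}<0$ implies $Q_{rs}=0$; $\gamma_{r,s}=0$ implies $Q_{rs}=D_rP_{rs}B_s$ with $D_r,B_s$ the finite intra-cluster limits, yielding condition (ii). The chosen ordering makes $\gamma_{r,s}$ monotone in the product order on $(r,s)$, so the zero sets of $P$ and $Q$ become a down-set and an up-set of this order, which are precisely the staircase conditions (iii) and (iv); the assumption $[p]\neq[q]$ forces $l,m>1$, giving (i). The hardest step is exactly this final bookkeeping: verifying that the ordering can be arranged consistently so that the vanishing sets take the precise staircase form rather than some weaker pattern, and that interior blocks which happen to vanish in both $P$ and $Q$ do not disrupt the structure. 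The hypothesis $\mathpzc{y}\subseteq\frk_d^k$ enters to ensure no complete block-row or block-column of $P$ or $Q$ vanishes, while $\g_d^k\subseteq\mathpzc{y}$ is used to place the approximating sequence inside $\mathpzc{y}$.
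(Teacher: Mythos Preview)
Your proposal is correct and follows essentially the same route as the paper: diagonalise $B_n$ via the SVD and compactness of $\O(d+1)$, normalise and pass to subsequences so that all diagonal ratios converge, partition rows and columns into blocks of equal asymptotic growth, and read off the vanishing pattern of $P$ and $Q$ from whether the combined growth in each block tends to $0$, $\infty$, or a finite nonzero limit; the converse is the same explicit power-of-$n$ scaling construction, with density of $\g_d^k$ used to land the sequence in $\mathpzc{y}$. One small point of bookkeeping to watch: with both row and column clusters ordered by \emph{increasing} growth, your $\gamma_{r,s}$ is monotone in the product order, whereas conditions (iii)--(iv) involve $a\le r,\ b\ge s$, so to match the staircase exactly you must reverse one of the two orderings (the paper achieves this implicitly by working with the ratio $(D_n)_{ii}/(B_n)_{jj}$ rather than a product).
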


Note that columns can be reordered by the right-action of $\GL$.
The form of the matrices $P$ and $Q$ in Equations~\eqref{T2p},\eqref{T2q} is illustrated in Figure~\ref{Fig:Hausdorff}.
The shape of $P$ fulfills the projective subspace constraint $(I,j)\in C(P)$ with $I$ comprising the rows of the matrix $P_{lm}$ and $j$ being $d+1$ minus the number of columns of $P_{lm}$.
Further, the shape of $Q$ fulfills the complementary projective subspace constraint $(I^c,d+1-j)\in C(Q)$.
\begin{figure}
\begin{center}
\begin{tikzpicture}[scale=0.5]

\draw[thick] (-0.2,6.1) to [out=260, in= 100] (-0.2,-0.1);
\draw[thick] (5.2,6.1) to [out=280, in= 80] (5.2,-0.1);
\fill[pattern=north east lines,pattern color=red!70] (1.025,6.025) -- (1.025,3.025) -- (2.025,3.025) -- (2.025,2.025) -- (4.025,2.025) -- (4.025,1.025) -- (5.025,1.025) -- (5.025,0.025) -- (5.025,6.025) -- (1.025,6.025);
\fill[pattern=north west lines,pattern color=blue!70] (-0.025,4.975) -- (0.975,4.975) -- (0.975,1.975) -- (2.975,1.975) -- (2.975,0.975) -- (3.975,0.975) -- (3.975,-0.025) -- (4.975,-0.025) -- (-0.025,-0.025) -- (-0.025,4.975);
\fill[green!30] (-0.025,6.025) rectangle (0.975,5.025);
\fill[green!30] (1.025,2.975) rectangle (1.975,2.025);
\fill[green!30] (3.025,1.975) rectangle (3.975,1.025);
\fill[green!30] (4.025,.975) rectangle (4.975,.025);
\end{tikzpicture}
\end{center}
\caption{The form of the matrices in Equations~\eqref{T2p} and~\eqref{T2q} of Proposition \ref{Hausdorff}.
$P$ is zero in the blue, hatched area (\protect\tikz \protect\fill[pattern=north west lines,pattern color=blue!70] (0,0) rectangle (.3,.2);) due to (iii),
 $Q$ is zero in the red, hatched area (\protect\tikz \protect\fill[pattern=north east lines,pattern color=red!70] (0,0) rectangle (.3,.2);) due to (iv).
In the green area (\protect\tikz \protect\fill[green!30] (0,0) rectangle (.3,.2);), the corresponding matrices are equivalent due to (ii).}
\label{Fig:Hausdorff} 
\end{figure}

\begin{proof}
The strategy of the proof is as follows: first, we will show that a topological non-Hausdorff subspace contains two shapes of the described form. This will be demonstrated by using the definition of Hausdorff spaces via sequences in first-countable spaces: if $p,q\in M$ with $M$ a first-countable topological space do not possess disjoint open neighborhoods, then there is a sequence with limit points $p$ and $q$.
In shape space, this gives us the sequences $([P_n])_{n\in\N},$ $([Q_n])_{n\in\N}$ with $D_nP_n=Q_nB_n$ for all $n\in\N$ and distinct limit points $[P],[Q]$.
We will show that w.l.o.g.\ $B_n$ is diagonal for all $n\in\N,$ and that the sequences $(B_n)_{n\in\N},(D_n)_{n\in\N}$ converge to singular matrices.
Different speeds of convergence lead to the described form of the limit points.

For the other direction, we will again use the idea of different speeds of convergence to construct, like in the proof of Proposition~\ref{blursplit}, a shape in any neighborhood of some $[p],[q]\in\mathpzc{y}$ of the described form. 

Now, let $[p],[q]\in \mathpzc{y}$ with $[p]\neq[q]$ such that there are no disjoint open neighborhoods of $[p]$ and $[q]$.
Since the topology of $\a_d^k$ is determined by sequences, there is a sequence $([r_n])_{n\in \N}$ in $\mathpzc{y}$ with limits $[p],[q]$.
W.l.o.g.\ $[r_n]\in\g_d^k$ for all $n\in\N$ since $\g_d^k$ is dense in $\a_d^k$ and contained in $\mathpzc{y}$. 
Thus, there are sequences $(P_n)_{n\in \N}$ with limit $P$ and $(Q_n)_{n\in \N}$ with limit $Q$ in the configuration space $\A_d^k$ such that $\pi(P_n)=\pi(Q_n)=[r_n]$ for all $n\in\N$ and $\pi(P)=[p],$ $\pi(Q)=[q]$.
Since $P_n$ and $Q_n$ have the same shape, there are non-singular diagonal matrices $D_n$ and matrices $B_n\in\GL$ such that 

$$D_nP_n=Q_nB_n $$ for all $n\in\N$.
Without loss of generality:
\begin{itemize}
	\item \textit{$B_n$ is diagonal for all $n\in\N$}: in fact, using a singular value decomposition for $B_n$, one obtains the existence of diagonal matrices $D_n, E_n$ and orthogonal matrices $U_n,V_n\in\O(d+1)$ such that $D_nP_n = Q_nV_nE_nU_n^t$ or equivalently $D_nP_nU_n = Q_nV_nE_n$.
	The sequences $(U_n)_{n\in\N}$ and $(V_n)_{n\in\N}$ have common converging subsequences since $\O(d+1)$ is compact, so w.l.o.g.\ $U_n\rightarrow U,$ $V_n\rightarrow V,$ $P_nU_n\rightarrow PU$ and $Q_nV_n\rightarrow QV$.
	Since right-multiplication by an orthogonal matrix does not change the projective shape of $P_n$ resp.\ $Q_n,$ we can choose $P_n,Q_n$ such that the corresponding $B_n$ is diagonal.
	\item \textit{$\|B_n\|_\infty =1$ for all $n\in\N$}; otherwise, consider the matrices $\|B_n\|_\infty^{-1} D_n$ and $\|B_n\|_\infty^{-1}B_n$ instead of $D_n$ and $B_n$.
	\item \textit{$(B_n)_{n\in\N}$ converges to some limit $B$ with $\|B\|_\infty=1$} since $(B_n)_{n\in\N}$ is w.l.o.g.\ bounded in the infinity norm, hence possesses at least a converging subsequence. Thus $Q_nB_n\rightarrow QB$.
	\item \textit{$(D_n)_{n\in\N}$ converges to some limit $D$}, hence $\|D_n\|_\infty\leq\rho,$ $\rho>0,$ for all $n\in\N$; else, since $D_nP_n\rightarrow QB$ and $P_n\rightarrow P$, a row of $P$ would be the null vector which is impossible.
	\item \textit{$B$ and $D$ are singular, but non-trivial, i.e., $B,D\neq 0$}: if $B$ is non-singular, so is $D$ since, otherwise, $QB$ and thus $Q$ would have a vanishing row which is impossible. If $D$ is non-singular, so is $B$ since, otherwise, $P$ would be of rank less than $d+1$ in contradiction to the assumption $\mathpzc{y}\subseteq\frk_d^k$.
	If both are non-singular, then $P=D^{-1}QB$ in contradiction to $[p]\neq[q]$.
	$B$ is non-trivial since $\|B\|_\infty =1$, while $D$ is non-trivial since $B$ is non-trivial and $P$ and $Q$ are of full rank.
\end{itemize}

By reordering of rows and columns and considering subsequences if necessary, one may assume that $\Bigl(\tfrac{(D_n)_{ii}}{(D_n)_{jj}}\Bigr)_{n\in\N}$ and $\Bigl(\tfrac{(B_n)_{ii}}{(B_n)_{jj}}\Bigr)_{n\in\N}$ converge to a finite limit for all $i<j$.
By merging columns respectively rows of equal speed of convergence to 0 (resp.\ columns/rows converging to non-zero values) into a block labeled $(r,s)$, one derives the proposed block structure of $P$ and $Q$. 
Blocks of type (ii) may arise if $\Bigl(\tfrac{(D_n)_{ii}}{(B_n)_{jj}}\Bigr)_{n\in\N}$ converges to a non-zero value for some, and hence all $(i,j)$ in block $(r,s)$.
If the sequence $\Bigl(\tfrac{(D_n)_{ii}}{(B_n)_{jj}}\Bigr)_{n\in\N}$ converges to 0, then $Q_{ij}=0$ which explains type (iv). 
For the blocks of type (iii), consider the equalities $P_nF_n=G_nQ_n$ with $F_n=\|B_n^{-1}\|_\infty^{-1} B_n^{-1}$ and $G_n=\|B_n^{-1}\|_\infty^{-1} D_n^{-1}$ for all $n\in\N$.
If the sequence $\Bigl(\tfrac{(D_n)_{ii}}{(B_n)_{jj}}\Bigr)_{n\in\N}$ diverges, or equivalently, the sequence $\Bigl(\tfrac{(B_n)_{jj}}{(D_n)_{ii}}\Bigr)_{n\in\N} = \Bigl(\tfrac{\|B_n\|^{-1}_\infty(D^{-1}_n)_{ii}}{\|B_n\|^{-1}_\infty(B^{-1}_n)_{jj}}\Bigr)_{n\in\N} = \Bigl(\tfrac{(G_n)_{ii}}{(F_n)_{jj}}\Bigr)_{n\in\N}$ converges to 0, then $P_{ij}=0$ which explains type (iii).
Recall that neither $P$ nor $Q$ may have trivial rows or columns by assumption.

Finally, we have to show that the upper left and bottom right blocks are of type (ii): since every row of $Q$ is non-trivial, $\Bigl(\tfrac{(D_n)_{kk}}{(B_n)_{d+1,d+1}}\Bigr)_{n\in\N}$ does not converge to 0. Since $P$ is of full rank, the sequence of inverses $\Bigl(\tfrac{(B_n)_{d+1,d+1}}{(D_n)_{kk}}\Bigr)_{n\in\N}$ does not converge to 0, whence it converges to a non-zero number. Analogously, $\Bigl(\tfrac{(B_n)_{11}}{(D_n)_{11}}\Bigr)_{n\in\N}$ converges to a non-zero number since $P$ has no row of zeroes, and $Q$ is of full rank. This finishes the proof that $[p],[q]$ are of the described form.

Conversely, assume there exist $[P],[Q]\in \mathpzc{y}$ with $P,Q$ in the described form, and let $U_{[p]}$ and $U_{[q]}$ be open neighborhoods of $[p]$ resp.\ $[q]$.
Then there is a $\delta>0$ such that $B_\delta(P)\subseteq \pi^{-1}(U_{[p]})$ and $B_\delta(Q)\subseteq \pi^{-1}(U_{[q]})$ in the space of (matrix) configurations.
We will construct a configuration $A$ which is an element of both  $B_\delta(P)$ and $B_\delta(Q)$.
For $n\in\N,$ consider block diagonal matrices 
\begin{equation*}
 \tilde{D}_n = \begin{pmatrix}
	n^{d_1} \tilde{D}_1 & \dots & 0 \\
	\vdots & \ddots & \vdots \\
	0 & \dots & n^{d_l} \tilde{D}_l
\end{pmatrix}
\end{equation*}
and
\begin{equation*}
\tilde{B}_n = \begin{pmatrix}
	n^{-b_1} \tilde{B}_1 & \dots & 0 \\
	\vdots & \ddots & \vdots \\
	0 & \dots & n^{-b_m} \tilde{B}_m
\end{pmatrix}
\end{equation*}
 with non-singular diagonal matrices $\tilde{D}_r,$ non-singular matrices $B_s$ and speeds of convergence $d_r,b_s\in\N_0$ such that
\begin{itemize}
	\item $b_r>b_s,$ $d_r>d_s$ for all $r>s$;
	\item $d_r=b_s$ and $\tilde{D}_r=D_r, \tilde{B}_s=B_s$ for pairs $(r,s)$ with $P_{rs},Q_{rs}\neq 0,$ and thus $Q_{rs}= D_rP_{rs}B_s$;
	\item $b_s\neq d_r$ and $\tilde{D}_r= \mbox{Id}, \tilde{B}_s= \mbox{Id}$ else; more precisely, let $d_r< b_s$ for all $(r,s)$ with $P_{rs}\neq0,$ while $d_r > b_s$ for all $(r,s)$ with $Q_{rs}\neq0$.
\end{itemize} Next, define the matrix $A=(A_{rs})$ with the same block structure as $P,Q$ and entries 
\begin{equation*}
 A_{rs}  = \begin{cases} 
P_{rs} & \mbox{if } P_{rs}\neq0, \\ 
n^{b_s-d_r} \tilde{D}_r^{-1} Q_{rs} \tilde{B}_s^{-1} & \mbox{if } P_{rs}= 0. \end{cases}
\end{equation*}
Then,
\begin{equation*}
 \bigl( \tilde{D} A \tilde{B} \bigr)_{rs}  = \begin{cases} 
Q_{rs} & \mbox{if } P_{rs}=0, \\ 
n^{d_r-b_s} \tilde{D}_r P_{rs} \tilde{B}_s & \mbox{if } P_{rs}\neq 0. \end{cases} 
\end{equation*}
Moreover, 
\begin{equation*} 
\max \bigl\{ n^{b_s-d_r} : (r,s) \mbox{ with } Q_{rs}\neq 0, P_{rs}=0 \bigr\} \leq n^{-1}
\end{equation*}
and
\begin{equation*}
\max \bigl\{ n^{d_r-b_s} : (r,s) \mbox{ with } P_{rs}\neq 0, Q_{rs} = 0 \bigr\} \leq n^{-1}.
\end{equation*}
Now, choose $n$ large enough such that 
\begin{equation*}
n^{-1}\cdot\max_{(r,s)}\bigl\{ \bigl\|\tilde{D}_r P_{rs} \tilde{B}_s\bigr\|_\infty, \bigl\|\tilde{D}_r^{-1} Q_{rs} \tilde{B}_s^{-1}\bigr\|_\infty \bigr\} <\delta,
\end{equation*}
 whence $A \in B_\delta(P)\cap B_\delta(Q)$, i.e.\  $B_\delta(P)\cap B_\delta(Q)\neq\emptyset$ as subsets of $\A_d^k$.
Since $\G_d^k$ is dense in $\A_d^k,$ there is an $\tilde{A}\in \G_d^k$ with $\tilde{A}\in B_\delta(P)\cap B_\delta(Q)$ whence $[\tilde{A}]\in U_{[p]}\cap U_{[q]}$.
Therefore, $\mathpzc{y}$ is not Hausdorff.
\end{proof}

Proposition~\ref{Hausdorff} shows that $\fr_d^k$ is not Hausdorff: the configurations 
\begin{equation*}
P=\begin{pmatrix}
	1 & 1 & \cdots & 1 \\
	1 & 0 & \cdots & 0 \\
	0 & 1 & \ddots & \vdots \\
	\vdots & \ddots & \ddots & 0 \\
	0 & \cdots & 0 & 1 \\
	0 & \cdots & 0 & 1 \\
	\end{pmatrix}\quad
\mbox{and}\quad Q=\begin{pmatrix}
	1 & 0 & \cdots & 0 \\
	1 & 0 & \cdots & 0 \\
	0 & 1 & \ddots & \vdots \\
	\vdots & \ddots & \ddots & 0 \\
	0 & \cdots & 0 & 1 \\
	1 & \cdots & 1 & 1 \\
\end{pmatrix}
\end{equation*}
are in $\Fr_d^{d+3}$.
Thus, $\fr_d^{d+3}$ is not Hausdorff since $[P]$ and $[Q]$ are of the described form of Proposition~\ref{Hausdorff}.
For $k>d+3,$ some of the landmarks may be repeated.
In particular, $\f_d^k\supseteq\fr_d^k$ is not Hausdorff.

\begin{example}\label{ex:Hausdorff}
In the case $d=1$ and $k=4,$ Proposition~\ref{Hausdorff} states that e.g.\ the topological subspace $\g_1^4$ (no coincidences) together with the single pair coincidences $[p_{3=4}]$ with three distinct landmarks $p_1,p_2,p_3$ but $p_3=p_4$ and $[q_{1=2}]$ with three distinct landmarks $q_2,q_3,q_4$ but $q_1=q_2$, though being T1, is not Hausdorff.
In fact, then
\begin{equation*}
[p_{3=4}]=\left[ \begin{pmatrix}
	1 & 1 \\ 1 & 0 \\ 0 & 1 \\ 0 & 1 
\end{pmatrix} \right]
\quad\mbox{and}\quad
[q_{1=2}]=\left[ \begin{pmatrix}
	1 & 0 \\ 1 & 0 \\ 0 & 1 \\ 1 & 1 
\end{pmatrix} \right]
\end{equation*}
as before.
Thus, $\g_1^4=\t_1^4$ is the only maximal Hausdorff subspace closed under relabeling and respecting the hierarchy of subspace constraints since any shape with a non-trivial subspace constraint features a point coincidence.
\end{example}

\section{Topological subspaces bounded by projective subspace numbers}\label{SubNum}

Proposition~\ref{Hausdorff} shows again that a space of shapes with a fixed pseudo-frame is a Hausdorff manifold.
However, these kind of spaces are not closed under relabeling, i.e., they do not fulfill requirement (b) of the introduction.
As a remedy we introduce the idea of bounding the number of landmarks in a projective subspace depending on its dimension.

To a vector $n=(n_1,\dots,n_d)\in\N^d$ with $1 \leq n_1 < n_2 < \dots < n_d$ define the topological sub\-space 
\begin{equation}
\SN_{\;d}^k(n) = \big\{p\in\A_d^k:|I| \leq n_j \mbox{ for all } (I,j)\in C(p) \big\} ,
\end{equation}
i.e., $\SN_{\;d}^k(n)\subseteq\A_d^k$ comprises those configurations $p$ for which there will be at most $n_j$ landmarks in any $(j-1)$-dimensional projective subspace of $\RP^d$.
We will then say the topological subspace $\SN_{\;d}^k(n)$ is \emph{bounded by the projective subspace numbers} $n$.
Note that $\SN_{\;d}^k(n)$ is closed under permutations and respects the hierarchy of projective subspace constraints, i.e.\ requirement (c) in the introduction, and contains $\G_d^k$ since $n_j\geq j$ for all $1\leq i\leq d,$ while $\SN_{\;d}^k(n)=\G_d^k$ if and only if $n_j= j$ for all $1\leq j\leq d,$ and $\SN_{\;d}^k(n)=\A_d^k$ if and only if $n_j\geq k$ for all $1\leq j\leq d$.

We are interested in projective subspace numbers $n$ which lead to Hausdorff spaces $\sn_d^k(n)$.
From Proposition~\ref{Hausdorff}, we can infer conditions for feasible $n\in\N^d$ under which the corresponding shape space $\sn_d^k(n)$ is a Hausdorff manifold.

\begin{theorem}\label{SubspaceNumbers}
Consider projective subspace numbers $n=(n_1,\dots,n_d)$. The following statements are equivalent:
\begin{enumerate}[label=(\roman*)]
	\item $\sn_d^k(n)$ is Hausdorff;
	\item $\sn_d^k(n)\subseteq\f_d^k$;
	\item $\sn_d^k(n)$ is an open, Hausdorff submanifold of $\f_d^k$;
	\item $n_j + n_{d+1-j} < k$ for all $1\leq j\leq d$.
\end{enumerate}
\end{theorem}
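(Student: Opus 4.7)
The plan is to establish a cycle of implications: $(iv) \Rightarrow (iii)$, the trivial $(iii) \Rightarrow (ii)$ and $(iii) \Rightarrow (i)$, then close the loop with $(ii) \Rightarrow (iv)$ and $(i) \Rightarrow (iv)$ by explicit constructions. The two workhorses are Proposition~\ref{SplitFree} (free $\Leftrightarrow$ not splittable) and both halves of Proposition~\ref{Hausdorff}, so that ultimately a single arithmetic estimate $k = |I| + |I^c| \leq n_j + n_{d+1-j}$ does the heavy lifting in every direction.

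For $(iv) \Rightarrow (iii)$ I would proceed in three steps. First, $\sn_d^k(n) \subseteq \f_d^k$: if some $p \in \SN_d^k(n)$ were not free, Proposition~\ref{SplitFree} would yield a proper non-empty $I$ with $\rk p_I + \rk p_{I^c} \leq d+1$; setting $j' = \rk p_I$, both $(I, j')$ and $(I^c, d+1-j')$ lie in $C(p)$ with $j' \in \{1, \dots, d\}$ (as both ranks are at least $1$ for non-empty $I, I^c$), so $k \leq n_{j'} + n_{d+1-j'} < k$ by (iv), a contradiction. Second, openness: $\SN_d^k(n)$ is the complement in $\A_d^k$ of the finite union $\bigcup_{j,\,|I| > n_j} \{p : \rk p_I \leq j\}$ of closed sets (lower semi-continuity of rank), so $\sn_d^k(n)$ is open in $\a_d^k$ because $\pi$ is open, and as an open subset of the differentiable manifold $\f_d^k$ (Theorem~\ref{MnfOfFree}) it inherits a differentiable submanifold structure. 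Third, Hausdorff: since $\sn_d^k(n) \subseteq \f_d^k \subseteq \frk_d^k$ and $\g_d^k \subseteq \sn_d^k(n)$, the ``only if'' half of Proposition~\ref{Hausdorff} applies; if $\sn_d^k(n)$ were not Hausdorff, it would produce distinct $[p], [q]$ with $(I, j) \in C(p)$ and $(I^c, d+1-j) \in C(q)$, yielding the same forbidden estimate.

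For the return arrows $(ii) \Rightarrow (iv)$ and $(i) \Rightarrow (iv)$, I would argue contrapositively. Suppose $n_j + n_{d+1-j} \geq k$ for some $j \in \{1, \dots, d\}$ and set $i = n_j$, so $k - i \leq n_{d+1-j}$. To defeat $(ii)$, take the block diagonal configuration $P$ with generic blocks $P_{11} \in \R^{i \times j}$ and $P_{22} \in \R^{(k-i) \times (d+1-j)}$ in general position: $[P]$ is splittable and hence not free by Proposition~\ref{SplitFree}, while its only non-trivial subspace constraints $(\{1, \dots, i\}, j)$ and $(\{i+1, \dots, k\}, d+1-j)$ obey the bounds, so $[P] \in \sn_d^k(n) \setminus \f_d^k$. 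To defeat $(i)$, take $P, Q$ in the two-block form ($l=m=2$) of Proposition~\ref{Hausdorff} with $D_r$ and $B_s$ identity, the same $P_{11}, P_{22}$ of full row rank, and generic non-zero $P_{12}, Q_{21}$; then $[P] \neq [Q]$, both have full rank and lie in $\sn_d^k(n)$ by the same bound check, so the ``if'' half of Proposition~\ref{Hausdorff} shows $\sn_d^k(n)$ is not Hausdorff.

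The main subtlety is to ensure in both constructions that no \emph{unintended} non-trivial subspace constraint slips in and pushes the configuration out of $\SN_d^k(n)$; since general position is an open dense condition on the block entries, and the trivial constraints with $|I| = j$ are automatically respected (as $\G_d^k \subseteq \SN_d^k(n)$ forces $n_j \geq j$), suitably generic choices always exist and both constructions go through.
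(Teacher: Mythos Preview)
Your argument follows the same logical skeleton as the paper's: establish $(iv)\Rightarrow(iii)$ directly via Propositions~\ref{SplitFree} and~\ref{Hausdorff}, then kill $(i)$ and $(ii)$ under $\neg(iv)$ by exhibiting a splittable shape in $\sn_d^k(n)$. The paper's converse is a bit slicker: rather than building two inseparable shapes via the ``if'' half of Proposition~\ref{Hausdorff}, it observes that the splittable $[p]$ already has $\Bl([p])\subsetneq\{[p]\}$ with $\Bl([p])\subseteq\sn_d^k(n)$ (since breaking constraints keeps you in $\SN_d^k(n)$), so $\sn_d^k(n)$ fails even to be T1, dispatching $(i)$, $(ii)$, $(iii)$ in one stroke.

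There is one concrete slip to fix. Your choice $i=n_j$ need not satisfy $k-i\geq d+1-j$ (and may even give $k-i\leq 0$ when $n_j\geq k$), so the block $P_{22}\in\R^{(k-i)\times(d+1-j)}$ can fail to exist or to have rank $d+1-j$; in that case the resulting configuration acquires the extra constraint $(\{1,\dots,k\},\,\rk P)$ and may drop out of $\SN_d^k(n)$. The remedy is to pick $i$ in the interval $\bigl[\max(j,\,k-n_{d+1-j}),\;\min(n_j,\,k-d-1+j)\bigr]$, which is non-empty precisely because $n_j+n_{d+1-j}\geq k$, $n_j\geq j$, $n_{d+1-j}\geq d+1-j$, and $k\geq d+3$. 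Two smaller points: ``full row rank'' for $P_{11},P_{22}$ should read full \emph{column} rank; and the block-diagonal $P$ has more non-trivial constraints than just the two you list (e.g.\ $(\{1,\dots,i,i+1\},\,j+1)$), but these are all harmless since $n_{j+b}\geq n_j+b$ by strict monotonicity.
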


\begin{proof}
First, assume (iv) $n_j + n_{d+1-j} < k$ for all $1\leq j\leq d$.
If $\sn_d^k(n)$ were not Hausdorff, there would be shapes $[p],[q]\in\sn_d^k(n)$ as in Proposition~\ref{Hausdorff} with $(I,j)\in C(p)$ and $(I^c,d+1-j)\in C(q)$ for some $I\subseteq\{1,\dots,k\}$ and some $j\in\{1,\dots,d\}$ with $|I|\leq n_j$ and $|I^c|\leq n_{d+1-j}$.
But then $k=|I|+|I^c|\leq  n_j + n_{d+1-j}$ in contradiction to the assumption.
Additionally, $\sn_d^k(n)\subseteq\f_d^k$ since $\sn_d^k$ does not contain any splittable shapes.
Further, $\sn_d^k$ is an open subset of $\f_d^k$ since it respects the hierarchy of projective subspace constraints, see Section~\ref{free}.
Thus, $\sn_d^k$ is a submanifold of the differentiable manifold $\f_d^k$ (Theorem~\ref{MnfOfFree}) and (iii) holds.

Conversely, assume that $n_j + n_{d+1-j} \geq k$ for some $1\leq j\leq d$.
Then, there are shapes $[p]\in\sn_d^k(n)$ with $(I,j),(I^c,d+1-j)\in C(p)$. But those shapes are splittable and $\{[p]\}\neq\Bl([p])\subseteq\sn_d^k(n)$ whence $\sn_d^k(n)$ is not even T1.
\end{proof}

Now, there is a canonical partial order on $\N^d$ induced by the component-wise total order on $\N$.
We call a vector $n\in\N^d$ \emph{maximal} if $\sn_d^k(n)$ is Hausdorff and $\sn_d^k(m)$ is not Hausdorff for any $m>n$ with respect to that partial order.
This notion of maximality accords with requirement (d) of the introduction.

Note that $\g_d^k$ is bounded by projective subspace numbers $n_j=j$ for $j\in\{1,\dots,d\}$ whence $g_d^k$ is a Hausdorff manifold since $k\geq d+3$.
However, this topological subspace is not maximal unless $d=1$ and $k=4$, since then $n_1 + n_{d} = d+1,$ so $n_d$ can be increased by 1 without violating Theorem~\ref{SubspaceNumbers}(iv) if $d>1$, or $n_1$ and $n_d$ if $k>d+3$.

\section{Tyler regular shapes}\label{Geometry}

The space $\t_d^k$ of Tyler regular shapes (cf.\ Section~\ref{approaches}) is a differentiable Hausdorff manifold since $\t_d^k = \sn_d^k(t)$ is bounded by the projective subspace numbers $t=(t_1,\dots,t_d)$ with 
\begin{equation}\label{TylerNumbers}
t_j = \left\lceil \frac{jk}{d+1} \right\rceil -1 \quad\mbox{for all } j\in\{1,\dots,d\},
\end{equation}
and thus for these values $t_j + t_{d+1-j} < \frac{jk}{d+1} + \frac{(d+1-j)k}{d+1} = k$.
In fact, $\t_d^k$ is maximal for some choices for $k$ and $d$.

\begin{proposition}
The vector $t\in\N^d$ in Equation~\eqref{TylerNumbers} of projective subspace numbers of $\t_d^k$ is maximal if and only if the greatest common divisor of $k$ and $d+1$ is either 1 or 2. In particular, $\t_d^k$ is maximal for
\begin{enumerate}[label=(\roman*)]
	\item $d=1$ and arbitrary $k\geq d+3,$
	\item arbitrary $d$ and $k=d+3$, as well as
	\item relatively prime $k$ and $d+1$.
\end{enumerate}
\end{proposition}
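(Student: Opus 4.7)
The plan is to unfold the definition of maximality using Theorem~\ref{SubspaceNumbers}(iv): $t$ is maximal precisely when every strictly increasing $m \in \N^d$ with $m > t$ (component-wise, with at least one strict inequality) must violate $m_j + m_{d+1-j} < k$ for some $j$. Everything then reduces to computing the ``slack'' $k - (t_j + t_{d+1-j})$ and analysing when it can be absorbed by an admissible increment.

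First I would pin down this slack in terms of $g := \gcd(k, d+1)$. Using the identity $jk + (d+1-j)k = (d+1)k$, one sees that $(d+1) \mid jk$ iff $(d+1)/g \mid j$; in that case both ceilings in~\eqref{TylerNumbers} are exact and $t_j + t_{d+1-j} = k - 2$, while otherwise both ceilings round up by exactly one and $t_j + t_{d+1-j} = k - 1$. Call the $j \in \{1, \ldots, d\}$ of the first kind \emph{slack indices}; these are the multiples of $(d+1)/g$ in $\{1, \ldots, d\}$.

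For $g \in \{1, 2\}$ I would argue maximality directly. When $g = 1$ there are no slack indices at all, so every symmetric sum already equals $k - 1$ and any single increment pushes some sum to $k$. When $g = 2$ the unique slack index is $j_0 = (d+1)/2$, which is self-paired; incrementing any non-slack $j$ fails as before, while incrementing $j_0$ by at least one gives $2 m_{j_0} \geq (k-2) + 2 = k$. So no admissible $m > t$ can keep all symmetric sums below $k$, and $t$ is maximal.

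For $g \geq 3$ I would exhibit an explicit witness of non-maximality. Let $j_0 = (d+1)/g$; since $g \geq 3$ one has $j_0 < (d+1)/2$, so $j_0 \neq d+1-j_0$ and both are slack indices. Take $m = t + e_{j_0}$. Then $m_{j_0} + m_{d+1-j_0} = (k-2) + 1 = k - 1 < k$, and every other symmetric sum is unchanged. The only nontrivial check is that $m$ remains strictly increasing, which reduces to $t_{j_0+1} - t_{j_0} \geq 2$; since $j_0 k/(d+1)$ is an integer, this difference equals $\lceil k/(d+1) \rceil$, and $k \geq d+3 > d+1$ forces this ceiling to be at least $2$. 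The three corollaries then follow at once from $\gcd(k, 2) \leq 2$, $\gcd(d+3, d+1) = \gcd(2, d+1) \leq 2$, and $\gcd(k, d+1) = 1$, respectively. I expect the main obstacle to be the two-case ceiling computation for $t_j + t_{d+1-j}$ together with the strict-increase check on the witness $m$; beyond these calculations, the result is a direct application of Theorem~\ref{SubspaceNumbers}.
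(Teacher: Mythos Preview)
Your proposal is correct and follows essentially the same route as the paper: compute $t_j + t_{d+1-j}$ via the ceiling identity, distinguish the ``slack'' indices where $(d+1)\mid jk$, and then treat $g=1$, $g=2$, and $g\ge 3$ separately. Your argument is in fact slightly more careful than the paper's, since you explicitly verify that the witness $m=t+e_{j_0}$ remains strictly increasing---a point the paper passes over silently.
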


\begin{proof}
If $k$ and $d+1$ are relatively prime, then $t_j + t_{d+1-j}=k-1$ for all $1\leq j\leq d$ due to rounding. More precisely, $t_j + t_{d+1-j}=k-1$ if $\tfrac{jk}{d+1}$ is not integral.
Otherwise $k$ and $d+1$ have a greatest common divisor $c>1,$ so $j<d+1$ needs to be a multiple of $\tfrac{d+1}{c}$.
However,
\begin{align*}
t_{(d+1)/c}+t_{d+1-(d+1)/c} & = \left\lceil \frac{d+1}{c}\frac{k}{d+1} \right\rceil -1  + \left\lceil (c-1)\frac{d+1}{c}\frac{k}{d+1} \right\rceil -1 \\
& = \frac{k}{c} + (c-1)\frac{k}{c} -2 \\
& = k-2,
\end{align*}
whence $t_{d+1-(d+1)/c}$ (and its successors if necessary) can be increased by 1 for $c\neq 2$ without violating Theorem~\ref{SubspaceNumbers}(iv).
In case $c=2$, though, $ j= \tfrac{d+1}{2} = d+1-\tfrac{d+1}{2} $ is the only projective subspace dimension for which $\tfrac{jk}{d+1}$ is integral and $t_{(d+1)/2}= t_{d+1-(d+1)/2} = \tfrac{k-2}{2}$ cannot be increased.
\end{proof}

For example, $\t_2^6$ is not maximal. Here, $t=(1,3)$ which is not maximal since both $n=(1,4)$ and $m=(2,3)$ are larger and do not violate Theorem~\ref{SubspaceNumbers}(iv).

Since $\t_d^k$ is a differentiable Hausdorff manifold, it may be equipped with a Rie\-man\-nian metric, for example in the following way.

Recall that any Tyler regular configuration $p\in\T_d^k$ has a matrix representation $P$ fulfilling 
\begin{equation} 
P^t P = \Id_{d+1} \label{TylerColumn}
\end{equation}
and 
\begin{equation} 
P_{i\cdot} P_{i\cdot}^t=\tfrac{d+1}{k} \quad \mbox{ for all } i\in\left\{1,\dots,k\right\} \label{TylerRow}
\end{equation}
(see Section~\ref{approaches}).
Again, this Tyler standardization $P$ is only unique up to multiplication of the rows by~$\pm 1$ and right-multiplication by an orthogonal matrix, i.e.\ unique up to a compact group action, and can be considered as a projective pre-shape.
Even more, we can remove the action of the orthogonal group by passing to the $k\times k$-dimensional matrix $PP^t$.

Now, the space of Tyler standardized configurations of Tyler regular shapes is a submanifold of $\R^{k\times (d+1)}$ and therefore naturally inherits a Riemannian metric from $\R^{k\times (d+1)}$.
Since every element of the remaining group action acts as an isometry on $\R^{k\times (d+1)}$, the push-forward of the Riemannian metric on the space of Tyler standardized configurations to $\t_d^k$ is a Riemannian metric on $\t_d^k$.

This standardization suggests itself through the following geometric reasoning: consider a shape $[P]\in\frk_d^k$ of full rank and one of its matrix configurations $P$.
By definition, $P$ is only unique up to left-multiplication with non-singular, diagonal $k\times k$-dimensional matrices and right-multiplication of non-singular $(d+1)\times (d+1)$-dimensional matrices.
Indeed, we can view the columns of the $k\times (d+1)$-dimensional matrix $P$ as a basis of a $(d+1)$-dimensional linear subspace of $\R^k,$ and the action of $\GL$ as a change of basis.
In particular, we can choose an orthonormal basis of the column space as a representation, i.e.\ a matrix $P$ with orthonormal columns.
Then, $P^tP=\Id_{d+1}$ with $P$ being unique up to the action of $\O(d+1)$ from the right.

Following this line of thought, we can think of the left-action of diagonal matrices as an action on the Grassmannian manifold $\Gr(k,d+1)$ of $(d+1)$-di\-men\-sio\-nal linear subspaces of $\R^k$. 
Of course, elements of the Grassmannian $\Gr(k,d+1)$ can be represented by the corresponding projection matrices
\begin{equation}
\mathcal{P}_P = P \bigl(P^tP\bigr)^{-1} P^t
\end{equation}
onto the column space of $P$.
This is the so-called Vero\-ne\-se-Whitney embedding of $\Gr(k,d+1)$ into $\R^{k\times k}$.
$\mathcal{P}_P$ is then a $k\times k$-dimensional matrix of rank and trace $d+1$.
In this representation, the action of diagonal matrices on the Grassmannian acts infinitesimally like certain rotations in $\R^k$:
for a non-singular diagonal matrix $D=\mbox{diag}(D_i)_{i=1,\dots,k}$ in a sufficiently small neighborhood of $\Id_k$ use
\begin{align*}
\bigl(P^tD^2P\bigr)^{-1} & = \bigl(\Id_{d+1}-(\Id_{d+1}-P^tD^2P)\bigr)^{-1} \\ & = \sum_{n=0}^\infty{\bigl(\Id_{d+1}-P^tD^2P\bigr)^n}
\end{align*}
and $\tfrac{\partial}{\partial D_i} D=e_ie_i^t$ with $e_i$ being the $i$-th canonical basis vector of $\R^k$ to obtain
\begin{align*}
\tfrac{\partial}{\partial D_i}&\mathcal{P}_{DP}  ={} \tfrac{\partial}{\partial D_i} DP\bigl(P^tD^2P\bigr)^{-1} P^tD \\
={}& e_ie_i^t P\bigl(P^tD^2P\bigr)^{-1} P^tD  + DP\bigl(P^tD^2P\bigr)^{-1} P^te_ie_i^t \\
& + DP \Biggl[ \sum_{n=1}^\infty{ \sum_{l=1}^n{ \bigl(\Id_{d+1}- P^tD^2P\bigr)^{l-1} \bigl(-2D_iP^te_ie_i^tP\bigr) \bigl(\Id_{d+1}- P^tD^2P\bigr)^{n-l} }} \Biggr]P^tD.
\end{align*}
For $D=\Id_k,$ $P^tP=\Id_{d+1},$ and consequently $P^tD^2P=\Id_{d+1},$ $D_i=1,$ $\mathcal{P}_{P}^2=\mathcal{P}_{P}=PP^t$, we conclude
\begin{align*}
\tfrac{\partial}{\partial D_i}\mathcal{P}_{DP} & = e_ie_i^t PP^t + PP^t e_ie_i^t - 2PP^te_ie_i^tPP^t \\
 & = \bigl(\underbrace{e_ie_i^t \mathcal{P}_{P} - \mathcal{P}_{P}e_ie_i^t}_{\mbox{antisymmetric}}\bigr) \mathcal{P}_{P} + \mathcal{P}_{P} \bigl( \underbrace{\mathcal{P}_{P}e_ie_i^t - e_ie_i^t\mathcal{P}_{P}}_{\mbox{antisymmetric}}\bigr),
\end{align*}
while the infinitesimal action of the orthogonal group $\O(k)$ acting by conjugation is given by 
\begin{align*}
\tfrac{\partial}{\partial t}|_{t=0}O(t)\mathcal{P}_{P}O(t)^t & = \dot{O}(0)\mathcal{P}_{P} + \mathcal{P}_{P}\dot{O}^t(0) \\
& = \dot{O}(0)\mathcal{P}_{P} - \mathcal{P}_{P}\dot{O}(0)
\end{align*}
for a differentiable curve $\R\ni t \mapsto O(t)\in\O(k),$ $O(0)=\Id_k$ with antisymmetric $\dot{O}(0)\in \mathfrak{o}(k)=\{M\in\R^{k\times k}: M=-M^t\}$.
Hence, the diagonal matrices act infinitesimally like certain rotations. 

In fact, $\tfrac{\partial}{\partial D_i}\mathcal{P}_{DP}$ is an infinitesimal rotation in the plane spanned by $\mathcal{P}_P e_i$ and $e_i$.
This suggests to fix the angle 
\begin{equation*}
\bigl\langle e_i, \mathcal{P}_Pe_i\bigr\rangle = e_i^t\mathcal{P}_Pe_i = e_i^tPP^te_i = P_{i\cdot}P_{i\cdot}^t
\end{equation*}
for all $1\leq i\leq k$ in order to standardize the projection matrix $\mathcal{P}_P$ and thus the configuration $P$.
Of course, we require invariance under permutations whence all directions $e_i$ resp.\ landmarks $P_{i\cdot}$ have to be treated equally, i.e.,
\begin{equation}
P_{i\cdot}P_{i\cdot}^t = C\in\R
\end{equation} 
for all $1\leq i\leq k$.
The constant $C$ has to be $\tfrac{d+1}{k}$ since the values $P_{i\cdot}P_{i\cdot}^t$ are the diagonal elements of $\mathcal{P}_P$ and $\mathcal{P}_P$ has trace $d+1$ as it is the orthogonal projection onto a $(d+1)$-dimensional linear subspace. We thus obtain Equation~\eqref{TylerRow}.

This discussion of Tyler standardization shows that the topological subspace of Tyler regular shapes is a topological subspace of the quotient of a Grassmannian with a finite group action (multiplication of the rows by $\pm1$) whence we can obtain a Riemannian metric on this space by considering one on the Grassmannian: the tangent space at the point $\mathcal{P}_P = P \bigl(P^tP\bigr)^{-1} P^t$ is 
\begin{equation}
\Bigl\{ \bigl[\mathcal{P}_P, A \bigr] \, : A\in\mathbf{so}(k), \mbox{diag}\bigl[\mathcal{P}_P, A \bigr] = 0 \Bigr\},
\end{equation}
with the standard Riemannian metric $\langle A,B\rangle = \mbox{tr}(A^tB)$ on $\R^{k\times k}$ which up to a constant induces the very metric given above.
 
A result by Tyler \cite{Tyl}, cf.\ \cite{KM}, shows that Tyler standardization is possible for the Tyler regular shapes defined in Section~\ref{approaches};
the only other ones for which it is possible are those splittable shapes $[p]$ for which $|I|=\tfrac{jk}{d+1}$ and $|I^c|=\tfrac{(d+1-j)k}{d+1}$ for any projective subspace constraint $(I,j)\in C(p)$ with $(I^c,d+1-j)\in C(p)$ and $|I|<\tfrac{jk}{d+1}$ otherwise. 
The latter can obviously only exist when $d+1$ and $k$ have a common divisor.
The space of projective shapes which allow Tyler standardization then does not respect the hierarchy of projective subspace constraints if there exists such a splittable Tyler standardizable shape.
However, it can be shown to be closed under permutations and a differentiable manifold by identifying these splittable configurations with those in its blur.
Unfortunately, it is unclear if the Riemannian metric given above can be extended to this subspace since the remaining discrete group action is not free on the splittable Tyler standardized configurations.
Even worse, the metric on $\t_d^k$ given above is not complete if splittable Tyler standardizable shapes exist.

If $\t_d^k$ is not maximal, i.e., if and only if $d+1$ and $k$ have a common divisor greater than~2, then $\t_d^k$ is a submanifold of a larger feasible topological subspace bounded by projective subspace numbers.
However, the Riemannian metric on $\t_d^k$ given above cannot be extended to the larger topological subspace since the elements lying in the blur of a splittable Tyler standardizable shape would have distance 0 in this extension, i.e., the extension cannot be a metric.

\begin{example}
In the case $d=1$ and $k=4$, the Tyler standardizable shapes are the Tyler regular ones, i.e.\ those in general position, and the three splittable shapes with double pair coincidences (type (c) from Section~\ref{case14})
\begin{equation*}
\left[\begin{pmatrix} 1 & 0 \\ 1 & 0 \\ 0 & 1 \\ 0 & 1 \end{pmatrix}\right],\quad \left[\begin{pmatrix} 1 & 0 \\ 0 & 1 \\ 1 & 0 \\ 0 & 1 \end{pmatrix}\right] \quad\mbox{and}\quad \left[\begin{pmatrix} 1 & 0 \\ 0 & 1 \\ 0 & 1 \\ 1 & 0 \end{pmatrix}\right].
\end{equation*}
\end{example}

\section{Discussion}

The subject of this article was to find a reasonable differentiable Hausdorff submanifold of projective shape space.
It turns out that the topological subspace comprising shapes of configurations with trivial isotropy group is only a differentiable T1 manifold, but not Hausdorff in contrast to the situation in similarity and affine shape spaces, cf.\ \cite{DM,KBCL} resp.\ \cite{GT,PM}.
Charts were constructed by introducing the concept of pseudo-frames generalizing the well-known notion of projective frames.

Additionally, by bounding the number of landmarks per projective subspace of $\RP^d$, a new class of reasonable topological subspaces, namely those bounded by projective subspace numbers, was introduced.
For this class, a criterion was given for deciding whether these topological subspaces are differentiable Hausdorff manifolds.
Indeed, one of these topological subspaces has been considered in literature before, namely the space of Tyler regular shapes.
By Tyler standardization, for which we presented new, geometric arguments, this topological subspace can be endowed with a Riemannian metric.
When it is maximal in the class of topological subspaces bounded by projective subspace numbers, one could say that it fulfills all of the requirements except that the Riemannian metric might not be complete.

However, it remains unclear how to endow other topological subspaces with a complete Riemannian metric, in particular in cases where the topological subspace of Tyler regular shapes is not maximal.

\paragraph{Funding.} F. Kelma gratefully acknowledges financial support by Klaus Tschira Stiftung gGmbH, project 03.126.2016. The funding source had no direct involvement in the conduct of this research.
\paragraph{Declarations of interest.} None.

\bibliographystyle{elsarticle-num}
\bibliography{proj}

\begin{thebibliography}{10}
\expandafter\ifx\csname url\endcsname\relax
  \def\url#1{\texttt{#1}}\fi
\expandafter\ifx\csname urlprefix\endcsname\relax\def\urlprefix{URL }\fi
\expandafter\ifx\csname href\endcsname\relax
  \def\href#1#2{#2} \def\path#1{#1}\fi

\bibitem{FL}
O.~Faugeras, Q.-T. Luong, The Geometry of Multiple Images, The MIT Press,
  Cambridge, 2001.

\bibitem{HZ}
R.~Hartley, A.~Zisserman, Multiple View Geometry in Computer Vision, Cambridge
  University Press, Cambridge, 2003.

\bibitem{KBCL}
D.~G. Kendall, D.~Barden, T.~K. Carne, H.~Le, Shape and Shape Theory, Wiley,
  Chichester, 1999.

\bibitem{GT}
D.~Groisser, H.~D. Tagare, On the topology and geometry of spaces of affine
  shapes, J. Math. Imaging Vis. 34~(2) (2009) 222--233.

\bibitem{PM}
V.~Patrangenaru, K.~V. Mardia, Affine shape analysis and image analysis, in:
  R.~G. Aykroyd, K.~V. Mardia, M.~J. Langdon (Eds.), Stochastic Geometry,
  Biological Structure and Images, 2003, pp. 57--62.

\bibitem{MP}
K.~V. Mardia, V.~Patrangenaru, Directions and projective shapes, Ann. Stat.
  33~(4) (2005) 1666--1699.

\bibitem{KM}
J.~T. Kent, K.~V. Mardia, A geometric approach to projective shape and the
  crossratio, Biometrika 99~(4) (2012) 833--849.

\bibitem{RG}
J.~Richter-Gebert, Perspectives on Projective Geometry, Springer, Berlin, 2011.

\bibitem{Lee}
J.~M. Lee, Introduction to Smooth Manifolds, 2nd Edition, Springer, New York,
  2013.

\bibitem{DM}
I.~L. Dryden, K.~V. Mardia, Statistical Shape Analysis, Wiley, Chichester,
  1998.

\bibitem{AP}
A.~Arkhangel'ski\v{\i}, V.~Fedorchuk, L.~Pontryagin, General Topology I,
  Springer, Berlin, 1990.

\bibitem{K3}
F.~Kelma, Projective shapes: Topology and means, Ph.D. thesis, Technische
  Universit\"at Ilmenau (2017).

\bibitem{Tyl}
D.~E. Tyler, {A distribution-free $M$-estimator of multivariate scatter}, Ann.
  Stat. 15~(1) (1987) 234--251.

\end{thebibliography}

\end{document}